\newcommand{\unit}{\texttt{1}\!\!\texttt{l}}
\newcommand{\intbar}{{- \hspace{- 1.05 em}} \int}
\newcommand{\intbr}{{- \hspace{- 0.9 em}} \int}
\def \D #1{\underline{D}_{#1}}
\def \d #1{\underline{\tilde{D}}_{#1}}
\def \H {\mathcal{H}}
\numberwithin{equation}{section}
\author{Charles M. Elliott\footnotemark[1]~ and Hans Fritz\footnotemark[1]}
\title{Time-periodic solutions of advection-diffusion equations on moving hypersurfaces}
\date{}
\begin{document}
\maketitle
\renewcommand{\thefootnote}{\fnsymbol{footnote}}
\footnotetext[1]{Mathematics Institute, Zeeman Building, University of Warwick, Coventry. CV4 7AL. UK 
(C.M.Elliott@warwick.ac.uk, H.Fritz@warwick.ac.uk)}
\begin{abstract}
In this paper we study time-periodic solutions
to advection-diffusion equations  
of a scalar quantity $u$ on a periodically moving $n$-dimensional hypersurface 
$\Gamma(t) \subset \mathbb{R}^{n+1}$. We prove existence and uniqueness of solutions
in suitable H\"older spaces. 
\end{abstract}

\begin{keywords} 
Periodic solutions, advection diffusion, moving surface, existence and uniqueness. 
\end{keywords}

\begin{AMS}
 35A01, 35A02, 35B10, 35R01, 58J35.
\end{AMS}

\section{Introduction}
In this paper we consider the advection and diffusion equation
\begin{align}
\label{advec_diff_eq}
	\Delta_{\Gamma(t)} u - u \nabla_{\Gamma(t)} \cdot v - \partial^\bullet u = f
\end{align}
for a scalar quantity 
$u: \overline{\mathcal{G}}_t \rightarrow \mathbb{R}$
on the space-time hypersurface 
$\mathcal{G}_t := \bigcup_{t \in (0,T)} \Gamma(t) \times \{t\} \subset \mathbb{R}^{n+2}$,
where $\Gamma(t)$ denotes a closed $n$-dimensional moving hypersurface 
$\Gamma(t) \subset \mathbb{R}^{n+1}$.
Such equations arise in many applications such as processes on biological cell surfaces, 
\cite{EllStiVen12} and the transport of surfactants on fluid interfaces, \cite{GarLamSti14}. 
The variational Hilbert space theory for the initial value problem has been considered in \cite{AlpEllSti14}.

Here  the motion of the hypersurface is assumed to be periodic in time with period $T$
in the sense that $\Gamma(0) = \Gamma(T)$.
The velocity, $v: {\mathcal{G}}_t \rightarrow \mathbb{R}^{n+1}$, 
of the moving hypersurface $\Gamma(t)$
is given by $v = v_{N} + v_{T}$, where $v_N$ denotes the normal velocity  of  $\Gamma(t)$
and $v_T$ is an advective velocity field tangential to  $\Gamma(t)$. 
The tangential gradient and the Laplace-Beltrami operator on $\Gamma(t)$ 
are denoted by $\nabla_{\Gamma(t)}$ and $\Delta_{\Gamma(t)}$, respectively.
The material derivative
$\partial^\bullet u$ is given by
\begin{align}
\label{material_derivative}
	\partial^\bullet u = u_t +  v_N \cdot \nabla u + v_T \cdot \nabla_{\Gamma(t)} u.
\end{align}
	A solution $u: \overline{\mathcal{G}}_t  \rightarrow \mathbb{R}$ 
of the advection-diffusion equation (\ref{advec_diff_eq}) is called periodic if
\begin{align}
	u(\cdot, 0) = u(\cdot, T) \quad \textnormal{on} \quad \Gamma(0).
	\label{periodic_condition}
\end{align}

The aim of this work is to establish the existence and uniqueness of periodic solutions. 
Since the mass $m(t):= \int_{\Gamma(t)}{u(\cdot,t)d\sigma(t)}$ 
of a solution $u$ of the advection-diffusion equation (\ref{advec_diff_eq}) 
evolves according to 
$$
	m'(t) = - \int_{\Gamma(t)}{f(\cdot,t) d\sigma(t)},
$$
a necessary condition for $u$ to be periodic is that
\begin{align}
	\int_0^T \int_{\Gamma(t)} f d\sigma(t) dt = 0.  
	\label{condition_on_f}
\end{align}
However, please note that neither $f$ nor the velocity $v$ is supposed to be periodic.
That is, in general $\lim_{t \searrow 0} f(\cdot,t) \neq \lim_{t \nearrow T} f(\cdot,t)$ 
as well as $\lim_{t  \searrow 0} v(\cdot,t) \neq \lim_{t \nearrow T} v(\cdot,t)$.
Although we prove a slightly more general result, 
the main result of this paper is that for each given mass 
$m_0 \in \mathbb{R}$
there exists a unique periodic solution $u \in \H_{2 + \alpha}(\mathcal{G}_t)$
of the advection-diffusion equation (\ref{advec_diff_eq})
with initial mass $m(0) = m_0$.
Here $\H_{2 + \alpha}(\mathcal{G}_t)$ denotes a suitable H\"older space, which we
introduce below.

Since for $f=0$ the mass $m(t)$ of a solution $u$ is constant in time, 
$u$ describes a conservative scalar quantity in this case.
In order to prove existence and uniqueness 
of periodic solutions to the advection-diffusion equation (\ref{advec_diff_eq}) 
we have to look at slightly more general linear parabolic partial 
differential equations on $\mathcal{G}_t$. In fact, we have to consider 
advection-diffusion equations of the form
\begin{align}
	\Delta_{\Gamma(t)} u - \mathfrak{c} u - \partial^\bullet u = f,
	\label{gen_advec_diff_eq}
\end{align} 
where $\mathfrak{c}: {\mathcal{G}}_t \rightarrow \mathbb{R}$ is a scalar function on the
space-time hypersurface ${\mathcal{G}}_t$ that is not required  to be periodic. 
Unfortunately, it turns out that the periodicity
condition in (\ref{periodic_condition}) is too restrictive in this case even if 
we assume (\ref{condition_on_f}), since in general the mass of a solution $u$ 
to the equation (\ref{gen_advec_diff_eq})
cannot be periodic in time, that is $m(0) \neq m(T)$. 
The reason for this is the zero-order
term in (\ref{gen_advec_diff_eq}). For example, for $f=0$ and 
$\mathfrak{c}= \nabla_{\Gamma(t)} \cdot v + \alpha$ with $\alpha \in (0, + \infty)$
the mass decays exponentially, that is $m(t) = m(0) \exp(-\alpha t)$.
Hence, a periodic solution would have to satisfy 
$m(0) = \int_{\Gamma(0)}u(\cdot, 0) d\sigma(0) = 0$ in this case, 
which is, however, a much too restrictive
assumption for applications. Therefore, we slightly relax the notion of periodicity to
\begin{align}
	u(\cdot, 0) - \intbar_{\Gamma(0)}u(\cdot,0) d\sigma(0) 
	= u(\cdot, T) - \intbar_{\Gamma(0)}u(\cdot,T) d\sigma(0)
	\quad \textnormal{on} \quad \Gamma(0),
	\label{general_periodic_condition}
\end{align} 
where $\intbr_{\Gamma} u  d\sigma := \frac{1}{|\Gamma|} \int_{\Gamma} u d\sigma$
denotes the mean value of $u$ on $\Gamma$.
Fortunately, for the advection-diffusion equation (\ref{advec_diff_eq}) this condition is equivalent
to the condition (\ref{periodic_condition}) as long as the constraint (\ref{condition_on_f}) is satisfied. 

The study of time-periodic solutions to both linear and non-linear parabolic equations
has a long history, see for example \cite{GL} and references therein,
in particular \cite{Fi}, \cite{He} and \cite{Ve},
as well as \cite{MB} and \cite{GLnonlin}. 
However, we are not aware of any previous analytic work that
studies the periodic problem on moving hypersurfaces. 
Our methodology reformulates  the problem on a moving closed hypersurface  to a problem on a fixed hypersurface 
with time varying coefficients. In order to obtain some analytic results it is then useful to 
relate the hypersurface equation to a   Neumann type 
boundary problem on a flat domain,
see  the Appendix for further explanations. 
Observe that the problem in our case is slightly more involved than the results on the oblique 
derivative problems for flat domains  in \cite{GL}, since 
we cannot assume that the zero order term $\mathfrak{c}$ is non-negative.
For example for the choice $\mathfrak{c}= \nabla_{\Gamma(t)} \cdot v$
this is certainly not true in general. 
The fact that the zero-order term $\mathfrak{c}$ can be negative is indeed the reason, 
why this paper is beyond the scope of previous works. 
Furthermore, the slightly weaker periodicity condition
(\ref{general_periodic_condition}), 
which ensures the existence of periodic solutions for any initial mass 
$m(0) = m_0 \in \mathbb{R}$, 
does not seem to have been used in the literature so far.  
Anyway, it is still possible to adopt the techniques used by Lieberman in \cite{GL}
to our problem. Therefore, our existence proof 
mainly relies on a simple fixed point iteration and standard Fredholm theory.

Our work is partially motivated by numerical simulations of periodic solutions of advection-diffusion
equations on moving $2$-dimensional surfaces for $f=0$  performed in \cite{EV}
using an evolving surface finite element method.
The periodic solutions were obtained by computing the initial value problem
for arbitrary chosen initial conditions.
Indeed, the numerical solutions for different initial conditions
with same initial mass
appear to converge very quickly to the same time-periodic solution.
We would like to emphasize that in this numerical work  the formulation (\ref{advec_diff_eq}),
which entirely avoids the use of local coordinates and surface parametrisations, 
is very  suitable for the evolving surface finite element method.

This paper is organized as follows. First, we introduce the notation and
the parabolic H\"older spaces on the space-time hypersurface $\mathcal{G}_t$
as well as their associated norms. Then we rewrite the advection-diffusion equation
on the moving hypersurface $\Gamma(t)$ as an advection-diffusion equation
with time-dependent coefficients on a fixed reference hypersurface $\mathcal{M}$.
This formulation is more amenable for our purposes, since it can be easily
related to a (non-degenerate) Neumann type boundary problem on a flat domain.
We derive this Neumann boundary problem in the Appendix.
In Section $2$, we summarize the results of this paper.
This also serves as a reader's guide to the proof of the main result in
Theorem \ref{main_result}. The detailed proofs of all results given 
in Section $2$ can be found 
in Section $3$. In the Appendix, we discuss the technique to extend a surface partial
differential equation to a non-degenerate partial differential equation on an extended
neighbourhood. This result is used in Section $2$ and $3$ to prove existence
and uniqueness to the initial boundary value problem on hypersurfaces
without using any local parametrizations of the hypersurface. 

\section{Preliminaries}
We make use of the convention to sum over repeated indices.
\subsection{Hypersurfaces}
Henceforward, we assume that $\Gamma(t) \subset \mathbb{R}^{n+1}$
is a family of closed (that is compact and without boundary), orientable, connected, 
$n$-dimensional, embedded hypersurfaces of class $C^{l}_1$, with $l \in \mathbb{N}$,
and that there is a closed, orientable, connected, 
$n$-dimensional, embedded hypersurface $\mathcal{M} \subset \mathbb{R}^{n+1}$ 
of the same class and a $C^{l}_1$-embedding 
$X: \overline{\mathcal{G}} \rightarrow \mathbb{R}^{n+2}$
from the closure of the cylinder 
$\mathcal{G} :=\mathcal{M} \times (0,T) \subset \mathbb{R}^{n+2}$
onto $\mathbb{R}^{n+2}$ such that for any $t \in [0,T]$ the map $X(\cdot,t)$
is a bijection from $\mathcal{M}$ onto $\Gamma(t)$. 
This implies that 
$\mathcal{G}_t := \bigcup_{t \in (0,T)} \Gamma(t) \times \{t\} \subset \mathbb{R}^{n+2}$ 
is an $(n+1)$-dimensional space-time hypersurface of class $C^{l}_1$. 
Here, $C^{l}_1$ refers to $l$-times continuously differentiable functions,
whose derivatives are Lipschitz.
Note that we use the notation $C^{2,1}$
for functions continuously differentiable in time and twice
continuously differentiable in space.
We also assume that the motion of $\Gamma(t)$ is periodic in time
in the sense that $\Gamma(0) = \Gamma(T)$ and $X(\cdot,0) = X(\cdot,T)$,
respectively.

\subsection{Tangential gradient and material derivative}
Let $\mathcal{N} \subset \mathbb{R}^{n+1}$ be an arbitrary hypersurface. The tangent space to $\mathcal{N}$ at 
the point $x \in \mathcal{N}$ is the linear space
$$
	T_x \mathcal{N} := 
	\left\{ \tau \in \mathbb{R}^{n+1} ~|~
	\exists \gamma \in C^{1}((-\epsilon,\epsilon), \mathbb{R}^{n+1}),~
	\gamma((-\epsilon,\epsilon)) \subset \mathcal{N},
	\gamma(0) = x, \gamma'(0) = \tau \right\},
$$
see \cite{DE_acta}.
For a function $f$ on an arbitrary hypersurface $\mathcal{N}$ differentiable at $x \in \mathcal{N}$,
we define the tangential gradient of $f$ at $x \in \mathcal{N}$ by
$$
\nabla_{\mathcal{N}} f (x)  := \nabla \hat{f} (x) - \nu(x) \nu(x) \cdot \nabla \hat{f}(x) 
	= P(x) \nabla \hat{f}(x),
	\quad 
$$ 
where $\hat{f}$ denotes a differentiable extension of $f$ to an open
neighbourhood $U \subset \mathbb{R}^{n+1}$ of $x$,
such that $\hat{f}_{|\mathcal{N} \cap U} = f_{|\mathcal{N} \cap U}$,
see \cite{DE_acta} for more details. Here, $\cdot$ denotes the
Euclidean scalar product in the ambient space, 
$\nu(x)$ is a unit normal of $\mathcal{N}$ at $x$
and $P(x) = \unit - \nu(x) \otimes \nu(x)$ is the projection 
onto the tangent space of $\mathcal{N}$ at $x$. 
The components of the tangential gradient are denoted by
$$
	\left(
	\begin{matrix}
		\D 1 f \\
		\vdots \\
		\D {n+1} f	
	\end{matrix}		
	 \right) := \nabla_{\mathcal{N}} f.
$$
For a twice continuously differentiable function $f$ on $\mathcal{N}$ 
we have the commutator rule
\begin{align}
	\D \alpha \D \beta f - \D \beta  \D \alpha f
	= \left( \mathcal{H}_{\beta \eta} \nu_\alpha 
		- \mathcal{H}_{\alpha \eta} \nu_\beta \right) \D \eta f,
		\label{commutator_rule}
\end{align}
where $\mathcal{H} := \nabla_\mathcal{N} \nu$ denotes the (extended) Weingarten map on $\mathcal{N}$.
The Laplace-Beltrami operator on $\mathcal{N}$ is defined by 
$$
	\Delta_{\mathcal{N}} f := \nabla_{\mathcal{N}} \cdot \nabla_{\mathcal{N}} f.
$$

These definitions can be easily generalized to moving hypersurfaces $\Gamma(t)$,
as well as to the space-derivatives on the space-time hypersurfaces $\mathcal{G}$
and $\mathcal{G}_t$.
Since $\mathcal{G}$ is a cylinder the definition of the time-derivative of 
a function $f$ on $\mathcal{G}$
is obvious. On the space-time hypersurface $\mathcal{G}_t$ we define the material 
derivative $\partial^\bullet f$ of a function $f: \mathcal{G}_t \rightarrow \mathbb{R}$
by
$$
	\partial^\bullet f := (f \circ X)_t \circ X^{-1}.
$$ 
Since the velocity $v$ of $\Gamma(t)$ is given by
$$
	v := X_t \circ X^{-1},
$$
this definition is consistent with formula $(\ref{material_derivative})$. 

\subsection{H\"older spaces}
For a function $f: \mathcal{G} \rightarrow \mathbb{R}$ we define the norm
\begin{align*}
& |f|_{0,\mathcal{G}} := \sup_{(x,t) \in \mathcal{G}} |f(x,t)|,
\end{align*}
and we say that $f$ is H\"older continuous in $\mathcal{G}$ 
with exponent $\alpha \in (0,1]$ if the semi-norm
\begin{align*}
& H_{\alpha,\mathcal{G}}(f) := \sup_{(x,t) \in \mathcal{G}}
	\sup_{~(y,s) \in \mathcal{G} \setminus \{(x,t)\}} 
	\frac{|f(x,t) - f(y,s)|}{|(x,t) - (y,s)|^\alpha}
\end{align*}
is finite. 
Here $|(x,t)| := \max \{ |x|, |t|^{\frac{1}{2}}\}$
with $|x| := \left( \sum_{\alpha=1}^{n+1} x_\alpha^2 \right)^{\frac{1}{2}}$
is the parabolic distance in $\mathbb{R}^{n+2}$.
Furthermore, we define the norms
\begin{align*}
& |f|_{\alpha,\mathcal{G}} := |f|_{0,\mathcal{G}} + H_{\alpha,\mathcal{G}}(f),
\\
& |f|_{1 + \alpha, \mathcal{G}} := | f |_{0, \mathcal{G}}  
	+ \langle f \rangle_{1 + \alpha,\mathcal{G}}	
	+ | \nabla_\mathcal{M} f |_{\alpha,\mathcal{G}},	
\\
& |f|_{2 + \alpha, \mathcal{G}} := | f |_{0, \mathcal{G}} 
+ | \nabla_{\mathcal{M}} f |_{0, \mathcal{G}} 
+ \langle \nabla_{\mathcal{M}} f \rangle_{1 + \alpha,\mathcal{G}}	
+ | \nabla^2_\mathcal{M} f |_{\alpha,\mathcal{G}} + | f_t |_{\alpha,\mathcal{G}}, 	
\end{align*}
where
\begin{align*}
	\langle f \rangle_{1 + \alpha,\mathcal{G}} :=  \sup_{(x,t) \in \mathcal{G}}
	\sup_{~(x,s) \in \mathcal{G} \setminus \{(x,t)\}} 
	\frac{|f(x,t) - f(x,s)|}{|t - s|^\frac{1 + \alpha}{2}}.
\end{align*}
For $k=0,1,2$, we introduce the following H\"older spaces on $\mathcal{G}$
\begin{align*}
	\H_{k + \alpha} (\mathcal{G}) :=
	\left\{ f: \mathcal{G} \rightarrow \mathbb{R} 
			~|~ |f|_{k + \alpha, \mathcal{G}} < \infty  \right\}.
\end{align*}
Obvious modifications of the above definitions lead to the definition of
 $|\cdot |_{ k+ \alpha,\mathcal{M}}$ for $k=0,1,2$.
For a function $f$ on $\mathcal{G}$ the following inequality holds
$$
	| f(\cdot,t) |_{k + \alpha, \mathcal{M}} \leq |f|_{k + \alpha, \mathcal{G}},
	\quad \forall t \in [0,T], \forall \alpha \in (0,1], k=0,1,2.
$$
Let the hypersurfaces $\mathcal{M}$ and $\Gamma(t)$ as well as the embedding $X$ 
be of class $C^{2}_1$, 
then the norm $| \cdot |_{k+\alpha, \mathcal{G}_t}$ on the linear space 
$\H_{k + \alpha}(\mathcal{G}_t) := \{ f: \mathcal{G}_t \rightarrow \mathbb{R} 
			~|~ f \circ X \in \H_{k + \alpha} (\mathcal{G}) \}$ is defined by
\begin{align*}
	| f |_{k+ \alpha, \mathcal{G}_t} := | f \circ X |_{k + \alpha, \mathcal{G}}
	\quad \textnormal{for $k = 0,1,2$.}
\end{align*}

Henceforward, $d$ denotes the oriented distance function to 
$\mathcal{M} \subset \mathbb{R}^{n+1}$, see for example \cite{DDE}.
There exists $\delta > 0$ such that the decomposition
$$
	x = a(x) + d(x) \nu(a(x)),
$$
with $a(x) \in \mathcal{M}$ is unique for all 
$x \in \mathcal{N}_\delta$, where 
\begin{align}
\label{N_delta}
 \mathcal{N}_\delta := \{ x \in \mathbb{R}^{n+1} ~|~ |d(x)| < \delta \}.
\end{align} 
Please note, that for a $C^l$-hypersurface, $l \geq 2$, the oriented distance
function $d$ is also of class $C^l$, whereas the projection 
$a: \mathcal{N}_\delta \rightarrow \mathcal{M}$ is of class
$C^{l-1}$. 
The oriented distance function $d$ is also Lipschitz continuous 
on $\mathcal{N}_\delta$ 
and so is the projection $a$, 
provided that the width $\delta$ is chosen sufficiently small and $l \geq 2$.
This can be seen as follows
\begin{align*}
	| a(x) - a(y) | &\leq | x - y | + |d(x) - d(y)| + |d(y)| | \nu(a(x)) - \nu(a(y)) |
	\\
	&\leq C| x - y| + c\delta |a(x) - a(y)|.
\end{align*} 
The extensions of the unit normal $\nu$, of the projection $P$ 
and of the Weingarten map
$\mathcal{H}$ on $\mathcal{M}$ to the neighbourhood $\mathcal{N}_\delta$ are defined
by $\nu(x) := \nabla d(x) = \nu(a(x))$, $P(x) := \unit - \nu(x) \otimes \nu(x)$ 
and by $\mathcal{H}(x):= \nabla^2 d(x)$.
For a function $f$ on $\mathcal{M}$ we define the lift to $\mathcal{N}_\delta$ by
$f^l(x):= f(a(x))$. A direct calculation yields 
\begin{align}
	& \nabla f^l = (\unit - d \mathcal{H}) (\nabla_{\mathcal{M}} f)^l,
\label{nabla_f_l}
\\	
	& (\nabla_\mathcal{M} f)^l = (\unit - d \mathcal{H})^{-1} \nabla f^l,
\label{nabla_Gamma_f_l}	
\end{align}
compare to \cite{DeD}.
The definitions of the norms $|\cdot|_{k+\alpha, \mathcal{G}_\delta}$ and of the spaces
$\H_{k+\alpha}(\mathcal{G}_\delta)$ for $k=0,1,2$ on the cylinder
$\mathcal{G}_\delta := \mathcal{N}_\delta \times (0,T)$ are obvious.
In order to prove the norm equivalence
\begin{align*}
	\frac{1}{C_{k,\alpha}} |f|_{k + \alpha, \mathcal{G}}
	\leq | f^l |_{k + \alpha, \mathcal{G}_\delta}
	\leq C_{k,\alpha} |f|_{k + \alpha, \mathcal{G}}
\end{align*}
for functions $f: \mathcal{G} \rightarrow \mathbb{R}$, we need the following statement.
\begin{lemma}
\label{lemma_norms}
For functions $f,g: \mathcal{G} \rightarrow \mathbb{R}$ the following inequalities hold
\begin{align*}
&	H_{\alpha,\mathcal{G}}(fg) \leq H_{\alpha,\mathcal{G}}(f) |g|_{0,\mathcal{G}} 
	+ |f|_{0,\mathcal{G}} H_{\alpha,\mathcal{G}}(g),
\\
&   |fg|_{\alpha,\mathcal{G}} \leq |f|_{\alpha,\mathcal{G}} |g|_{0,\mathcal{G}} + |f|_{0,\mathcal{G}} |g|_{\alpha,\mathcal{G}}, 
\end{align*}
Analogue estimates hold for the norms $| \cdot |_{k+\alpha, \mathcal{G}_\delta }$ on 
$\mathcal{G}_\delta$.
\end{lemma}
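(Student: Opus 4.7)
The plan is to reduce both inequalities to the standard telescoping identity for products and then apply the triangle inequality. For the first inequality, I would start from the algebraic identity
\begin{equation*}
f(x,t)g(x,t) - f(y,s)g(y,s) = \bigl(f(x,t)-f(y,s)\bigr)g(x,t) + f(y,s)\bigl(g(x,t)-g(y,s)\bigr),
\end{equation*}
bound each summand in absolute value by $|f(x,t)-f(y,s)|\,|g|_{0,\mathcal{G}} + |f|_{0,\mathcal{G}}|g(x,t)-g(y,s)|$, divide by $|(x,t)-(y,s)|^{\alpha}$, and take the supremum over all pairs of distinct points in $\mathcal{G}$ to obtain the claimed bound on $H_{\alpha,\mathcal{G}}(fg)$.

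The second inequality then follows immediately by combining the first with the trivial pointwise estimate $|fg|_{0,\mathcal{G}} \leq |f|_{0,\mathcal{G}}|g|_{0,\mathcal{G}}$ and rearranging, using the definition $|\cdot|_{\alpha,\mathcal{G}} = |\cdot|_{0,\mathcal{G}} + H_{\alpha,\mathcal{G}}(\cdot)$. Specifically, I would write
\begin{equation*}
|fg|_{\alpha,\mathcal{G}} \leq |f|_{0,\mathcal{G}}|g|_{0,\mathcal{G}} + H_{\alpha,\mathcal{G}}(f)|g|_{0,\mathcal{G}} + |f|_{0,\mathcal{G}}H_{\alpha,\mathcal{G}}(g)
\end{equation*}
and split the first term as $\tfrac{1}{2}|f|_{0,\mathcal{G}}|g|_{0,\mathcal{G}} + \tfrac{1}{2}|f|_{0,\mathcal{G}}|g|_{0,\mathcal{G}}$, or more simply observe that it is bounded by either $|f|_{\alpha,\mathcal{G}}|g|_{0,\mathcal{G}}$ or $|f|_{0,\mathcal{G}}|g|_{\alpha,\mathcal{G}}$.

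For the analogous statements on $\mathcal{G}_\delta$, the case $k=0$ is identical: only the definition of the parabolic distance matters, not the domain. For $k=1$ and $k=2$, one would combine the $k=0$ case with the Leibniz rule $\nabla(fg) = (\nabla f)g + f(\nabla g)$, applied componentwise to the spatial gradients on $\mathcal{G}_\delta$, and $(fg)_t = f_t g + f g_t$ for the time derivative, together with the analogous telescoping for the time semi-norm $\langle \cdot \rangle_{1+\alpha}$. Each resulting product of derivatives is controlled by applying the $k=0$ estimate.

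I do not expect any genuine obstacle: the argument is a direct and standard calculation, and the only care needed is bookkeeping to ensure the right combination of $|\cdot|_{0}$ and Hölder semi-norms appears on each side. I would therefore present the proof of the first inequality in full, deduce the second in one line, and remark that the $\mathcal{G}_\delta$ versions follow by the same argument since the definitions of the norms differ only in the underlying domain.
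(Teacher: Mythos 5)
Your proposal is correct and fills in exactly the standard telescoping-plus-triangle-inequality argument that the paper's one-line proof ("easily follows from the definition of the Hölder coefficient") leaves implicit. The derivation of the second inequality from the first via $|fg|_{0,\mathcal{G}} \leq |f|_{0,\mathcal{G}}|g|_{0,\mathcal{G}}$, and the remark that the $\mathcal{G}_\delta$ versions are domain-agnostic, match the paper's intent.
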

\proof
The first inequality easily follows from the definition of the H\"older coefficient.
The second inequality is then a direct result.
\hfill $\Box$

\begin{lemma}
For $\mathcal{M}$ of class $C^{3}_1$ there exist constants $C_{k,\alpha} > 0$ 
such that
\begin{align}
	\frac{1}{C_{k,\alpha}} |f|_{k + \alpha, \mathcal{G}}
	\leq | f^l |_{k + \alpha, \mathcal{G}_\delta}
	\leq C_{k,\alpha} |f|_{k + \alpha, \mathcal{G}},
	\label{norm_equivalence}
\end{align}
for $k=0,1,2$ and $\alpha \in (0,1]$.
\end{lemma}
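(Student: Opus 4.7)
The strategy is to establish both inequalities in (\ref{norm_equivalence}) for each $k \in \{0,1,2\}$ by combining the Lipschitz continuity of the projection $a:\mathcal{N}_\delta\rightarrow\mathcal{M}$ with the identities (\ref{nabla_f_l})--(\ref{nabla_Gamma_f_l}), and then applying the product rule for Hölder seminorms provided by Lemma \ref{lemma_norms}. Throughout, I regard the geometric objects $d$, $\nu$, $\mathcal{H}$ as time-independent functions on $\mathcal{N}_\delta$, so their space-time Hölder norms on $\mathcal{G}_\delta$ are controlled solely by the assumed $C^3_1$-regularity of $\mathcal{M}$.

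For the base case $k=0$, the identity $a|_{\mathcal{M}} = \mathrm{id}$ together with the surjectivity of $a$ gives $|f^l|_{0,\mathcal{G}_\delta} = |f|_{0,\mathcal{G}}$. For the Hölder seminorm, the Lipschitz bound $|a(x)-a(y)|\leq C|x-y|$ already derived in the excerpt yields $H_{\alpha,\mathcal{G}_\delta}(f^l)\leq C^\alpha H_{\alpha,\mathcal{G}}(f)$, while the reverse inequality is immediate from restricting to $\mathcal{M}\times(0,T)\subset\mathcal{G}_\delta$, on which $f^l$ coincides with $f$.

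For $k=1$, the upper bound follows by applying Lemma \ref{lemma_norms} to the factorisation (\ref{nabla_f_l}): the matrix $\unit-d\mathcal{H}$ lies in $\H_\alpha(\mathcal{G}_\delta)$ thanks to $\mathcal{M}\in C^3_1$, and the $k=0$ equivalence controls $(\nabla_\mathcal{M} f)^l$. The mixed time-Hölder term $\langle f^l\rangle_{1+\alpha,\mathcal{G}_\delta}$ is bounded analogously, since $\unit-d\mathcal{H}$ is time-independent and the time increment estimate for $f^l$ reduces to that for $f\circ a$. The lower bound comes from (\ref{nabla_Gamma_f_l}), since $(\unit-d\mathcal{H})^{-1}$ has the same Hölder regularity as $\unit-d\mathcal{H}$. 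For $k=2$, one differentiates (\ref{nabla_f_l}) once more and expands $\nabla^2 f^l$ as a sum of terms of the form $\nabla(\unit-d\mathcal{H})\otimes(\nabla_\mathcal{M} f)^l$ and $(\unit-d\mathcal{H})\cdot\nabla\bigl((\nabla_\mathcal{M} f)^l\bigr)$, the latter being in turn controlled by $(\nabla_\mathcal{M}^2 f)^l$ via another application of (\ref{nabla_f_l}). Repeated use of Lemma \ref{lemma_norms} and of the equivalence already established for lower $k$ then yields the spatial estimates; the time derivative commutes with the lift, $(f^l)_t=(f_t)^l$, so the term $|f_t|_{\alpha,\mathcal{G}}$ is handled identically. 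The reverse direction is again obtained by inverting $\unit-d\mathcal{H}$.

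The main obstacle is the $k=2$ step, where one must ensure that every coefficient arising in the expansion of $\nabla^2 f^l$ is controlled in $\H_\alpha(\mathcal{G}_\delta)$ with a constant independent of $f$. This is exactly why the hypothesis $\mathcal{M}\in C^3_1$ is imposed: it supplies $d\in C^3_1$ and hence $\nabla(\unit-d\mathcal{H})\in\H_\alpha(\mathcal{G}_\delta)$, just enough regularity to close the product estimates of Lemma \ref{lemma_norms} in both directions of (\ref{norm_equivalence}) for $k=2$.
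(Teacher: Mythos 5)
Your proposal follows essentially the same route as the paper: the $k=0$ case via Lipschitz continuity of the projection $a$ and restriction to $\mathcal{M}$, the $k=1,2$ cases via the lift identities (\ref{nabla_f_l})--(\ref{nabla_Gamma_f_l}) combined with the Hölder product estimates of Lemma \ref{lemma_norms}, with $C^3_1$-regularity of $\mathcal{M}$ supplying the needed Hölder control on $\unit-d\mathcal{H}$ and its derivatives. The only detail glossed over is the small bootstrap step the paper uses to absorb the extra term $|\nabla f^l|_{\alpha,\mathcal{G}_\delta}$ via $H_{\alpha,\mathcal{G}_\delta}(\nabla f^l)\leq C(|\nabla^2 f^l|_{\alpha,\mathcal{G}_\delta}+\langle\nabla f^l\rangle_{1+\alpha,\mathcal{G}_\delta})$, but this is minor and the overall argument is correct.
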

\proof
Since the lifted function $f^l$ is constant in the normal direction,
we obtain
$$
	|f|_{0, \mathcal{G}} = |f^l|_{0, \mathcal{G}_\delta}.
$$
For the H\"older coefficient it is obvious that 
$H_{\alpha, \mathcal{G}}(f) \leq H_{\alpha, \mathcal{G}_\delta }(f^l)$.
Furthermore, since the projection $a$ is Lipschitz-continuous on $\mathcal{N}_\delta$
for $\delta > 0$ sufficiently small,
it follows that
\begin{align*}
	H_{\alpha, \mathcal{G}_\delta}(f^l)
	&=  \sup_{\substack{(x,t),(y,s) \in \mathcal{G}_\delta \\ (x,t) \neq (y,s)}} 
	\frac{|f^l(x,t) - f^l(y,s)|}{|(x,t) - (y,s)|^\alpha}
	\\
	&\leq 
	\sup_{\substack{(x,t),(y,s) \in \mathcal{G}_\delta \\ (x,t) \neq (y,s)}} 
	\frac{|(a(x),t) - (a(y),s)|^\alpha}{|(x,t) - (y,s)|^\alpha}	
	\sup_{\substack{(x,t),(y,s) \in \mathcal{G}_\delta \\ (x,t) \neq (y,s)}} 
	\frac{|f(a(x),t) - f(a(y),s)|}{|(a(x),t) - (a(y),s)|^\alpha}
	\\
	&\leq C H_{\alpha, \mathcal{G}}(f).
\end{align*}
This already establishes $|f|_{\alpha, \mathcal{G}}
	\leq | f^l |_{\alpha, \mathcal{G}_\delta}
	\leq C |f|_{\alpha, \mathcal{G}}$.
From Lemma \ref{lemma_norms} and formulas (\ref{nabla_f_l}) and (\ref{nabla_Gamma_f_l})
we then obtain
\begin{align*}
	| f |_{1 + \alpha, \mathcal{G}} 
	&= | f |_{0, \mathcal{G}} + \langle f \rangle_{1 + \alpha, \mathcal{G}}
	+ | \nabla_{\mathcal{M}} f |_{\alpha, \mathcal{G}} 
	\leq | f^l |_{0, \mathcal{G}_\delta} 
		+ \langle f^l \rangle_{1 + \alpha, \mathcal{G}_\delta}
		+ | (\nabla_{\mathcal{M}} f)^l |_{\alpha, \mathcal{G}_\delta}
\\	
	& \leq | f^l |_{0, \mathcal{G}_\delta} 
			+ \langle f^l \rangle_{1 + \alpha, \mathcal{G}_\delta}
			+ | {A}^{-1} \nabla f^l |_{\alpha, \mathcal{G}_\delta}
\\			
	& \leq C ( | f^l |_{0, \mathcal{G}_\delta} 
			+ \langle f^l \rangle_{1 + \alpha, \mathcal{G}_\delta}
			+ | \nabla f^l |_{\alpha, \mathcal{G}_\delta} )		
\end{align*}
and conversely,
\begin{align*}
	| f^l |_{1 + \alpha, \mathcal{G}_\delta}
	&= | f^l |_{0, \mathcal{G}_\delta} 
		+ \langle f^l \rangle_{1 + \alpha, \mathcal{G}_\delta} 
		+ | \nabla f^l |_{\alpha, \mathcal{G}_\delta}
	= | f^l |_{0, \mathcal{G}_\delta} 
		+ \langle f^l \rangle_{1 + \alpha, \mathcal{G}_\delta} 
		+ | {A} (\nabla_{\mathcal{M}} f )^l |_{\alpha, \mathcal{G}_\delta}
	\\
	& \leq C ( | f^l |_{0, \mathcal{G}_\delta} 
		+ \langle f^l \rangle_{1 + \alpha, \mathcal{G}_\delta} 
		+ | (\nabla_{\mathcal{M}} f )^l |_{\alpha, \mathcal{G}_\delta} )
	\\	
	& \leq C ( | f |_{0, \mathcal{G}} 
	+ \langle f^l \rangle_{1 + \alpha, \mathcal{G}}
	+ | \nabla_{\mathcal{M}} f |_{\alpha, \mathcal{G}} )
\end{align*}
where ${A} := \unit - d \mathcal{H}$. Similarly, the result for $k=2$ can be deduced
from
\begin{align*}
	& (\nabla^2 f^l)_{\alpha \beta} = A_{\alpha \delta} A_{\beta\gamma} (\D \delta \D \gamma f)^l
	 + \nabla_\alpha A_{\beta \gamma} (\D \gamma f)^l,
	\\
	& (\nabla_{\mathcal{M}}^2 f)^l_{\alpha \beta} 
	 = (A^{-1})^{\alpha \delta} (A^{-1})^{\beta \gamma} \nabla_\delta \nabla_\gamma f^l
	 + (A^{-1})^{\alpha \delta} \nabla_\delta (A^{-1})^{\beta \gamma} \nabla_\gamma f^l,
\end{align*}
which follows from formulas (\ref{nabla_f_l}) and (\ref{nabla_Gamma_f_l}).
For $| \cdot |_{2 + \alpha, \mathcal{G}}$ we then obtain
\begin{align*}
 |f|_{2 + \alpha, \mathcal{G}} &= |f|_{0,\mathcal{G}}
  + | \nabla_{\mathcal{M}} f |_{0,\mathcal{G}} 
  + \langle \nabla_{\mathcal{M}} f \rangle_{1 + \alpha,\mathcal{G}}
  + |\nabla^2_{\mathcal{M}} f |_{\alpha, \mathcal{G}} + | f_t |_{\alpha, \mathcal{G}}
  \\
  &\leq C (|f^l|_{0,\mathcal{G}_\delta}
  + | (\nabla_{\mathcal{M}} f)^l |_{0,\mathcal{G}_\delta} 
  + \langle (\nabla_{\mathcal{M}} f)^l \rangle_{1 + \alpha,\mathcal{G}_\delta}
  + |(\nabla^2_{\mathcal{M}} f)^l |_{\alpha, \mathcal{G}_\delta} 
  + | f^l_t |_{\alpha, \mathcal{G}_\delta})
  \\
  &\leq C (|f^l|_{0,\mathcal{G}_\delta}
  + | A^{-1} \nabla f^l |_{0,\mathcal{G}_\delta} 
  + \langle A^{-1} \nabla f^l \rangle_{1 + \alpha,\mathcal{G}_\delta}
\\ 
& \qquad \quad 
  + | ( (A^{-1})^{\alpha \delta} (A^{-1})^{\beta \gamma} \nabla_\delta \nabla_\gamma f^l ) |_{\alpha, \mathcal{G}_\delta} 
\\
& \qquad \quad
  + | ( (A^{-1})^{\alpha \delta} \nabla_\delta (A^{-1})^{\beta \gamma} \nabla_\gamma f^l ) |_{\alpha, \mathcal{G}_\delta}
  + | f^l_t |_{\alpha, \mathcal{G}_\delta})
  \\
  &\leq C (|f^l|_{0,\mathcal{G}_\delta}
  + | \nabla f^l |_{0,\mathcal{G}_\delta} 
  + \langle \nabla f^l \rangle_{1 + \alpha,\mathcal{G}_\delta}
  + | \nabla^2 f^l |_{\alpha, \mathcal{G}_\delta} 
  + | \nabla f^l |_{\alpha, \mathcal{G}_\delta}
  + | f^l_t |_{\alpha, \mathcal{G}_\delta}).
\end{align*}
Since 
$H_{\alpha, \mathcal{G}_\delta}(\nabla f^l) 
\leq C (|\nabla^2 f^l|_{\alpha, \mathcal{G}_\delta}
 + \langle \nabla f^l \rangle_{1 + \alpha, \mathcal{G}_\delta})$,
we can conclude that
$|f|_{2 + \alpha, \mathcal{G}} \leq C | f^l |_{2 + \alpha, \mathcal{G}_\delta}$.
The opposite direction follows in the same way. 
\hfill $\Box$

\begin{lemma}
The spaces $\H_{k+ \alpha} (\mathcal{G})$, with $k \in \{0,1,2\}$ and $\alpha \in (0,1]$,
are Banach spaces.
\end{lemma}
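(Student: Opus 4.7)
The plan is to take a Cauchy sequence $(f_n) \subset \H_{k+\alpha}(\mathcal{G})$ and produce a limit $f \in \H_{k+\alpha}(\mathcal{G})$ with $|f_n - f|_{k+\alpha, \mathcal{G}} \to 0$. Linearity and the fact that $|\cdot|_{k+\alpha, \mathcal{G}}$ is a norm are immediate from the definitions combined with Lemma \ref{lemma_norms}, so only completeness requires work. The cleanest route is to exploit the norm equivalence (\ref{norm_equivalence}) and argue on the lifted sequence $(f_n^l)$ on the flat cylinder $\mathcal{G}_\delta = \mathcal{N}_\delta \times (0,T) \subset \mathbb{R}^{n+2}$, where the classical parabolic H\"older calculus is at my disposal.

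By (\ref{norm_equivalence}), $(f_n^l)$ is Cauchy in $\H_{k+\alpha}(\mathcal{G}_\delta)$. Inspecting the seminorms that make up $|\cdot|_{k+\alpha, \mathcal{G}_\delta}$, the sequence $(f_n^l)$ is Cauchy in the sup norm, and for $k \geq 1$ so is $(\nabla f_n^l)$, while for $k=2$ so are $(\nabla^2 f_n^l)$ and $((f_n^l)_t)$. Each of these sequences therefore converges uniformly on $\mathcal{G}_\delta$ to a continuous limit, and the classical fact that uniform convergence of a sequence together with uniform convergence of its partial derivatives implies differentiability of the limit with the expected identifications lets me conclude that the uniform limit $F$ of $f_n^l$ has the required regularity, with $\nabla F$, $\nabla^2 F$, and $F_t$ being the uniform limits of $\nabla f_n^l$, $\nabla^2 f_n^l$, and $(f_n^l)_t$ respectively.

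Membership $F \in \H_{k+\alpha}(\mathcal{G}_\delta)$ together with norm convergence $f_n^l \to F$ then both follow from a Fatou-type lower-semicontinuity argument: the Cauchy property gives a uniform bound $|f_n^l|_{k+\alpha, \mathcal{G}_\delta} \leq C$, and passing pointwise to the limit inside each difference quotient appearing in the seminorms yields $|F|_{k+\alpha, \mathcal{G}_\delta} \leq C$; applying the same argument to the differences $f_n^l - f_m^l$ and sending $m \to \infty$ in the Cauchy bound $|f_n^l - f_m^l|_{k+\alpha, \mathcal{G}_\delta} \leq \varepsilon$ gives $|f_n^l - F|_{k+\alpha, \mathcal{G}_\delta} \leq \varepsilon$ for $n \geq N$. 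Finally, since each $f_n^l$ is constant in the normal direction $\nu$, so is $F$, hence $F = f^l$ for a unique $f : \mathcal{G} \to \mathbb{R}$, and a second application of (\ref{norm_equivalence}) transfers both membership and convergence back to the original norm. The main technical point is the identification of the uniform limit of the derivatives with the derivative of the uniform limit; this is standard calculus on a Euclidean domain, which is precisely why reducing to $\mathcal{G}_\delta$ via (\ref{norm_equivalence}) is advantageous.
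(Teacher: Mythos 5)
Your argument is correct and follows the same route as the paper: both reduce to the flat cylinder $\mathcal{G}_\delta$ via the norm equivalence \eqref{norm_equivalence} and invoke completeness of the classical parabolic H\"older spaces there. The paper treats that completeness (and the closedness of the subspace of functions constant in the normal direction) as known, whereas you spell out the uniform-limit and Fatou-type arguments in full, but the underlying strategy is identical.
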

\proof
The statement follows directly from the equivalence (\ref{norm_equivalence}) 
and the fact that
the spaces $\H_{k + \alpha, \mathcal{G}_\delta}$, $k =0,1,2$, are Banach spaces. 
\hfill $\Box$

\begin{lemma}
\label{interpolation_estimate_lem}
Let $\alpha \in (0,1)$. For each $\epsilon > 0$ there is a constant $C(\epsilon)$ such that
\begin{align}
\label{interpolation_estimate}
	|f|_{\alpha,\mathcal{G}} \leq  c \epsilon^{1 - \alpha} |f|_{2 + \alpha,\mathcal{G}} 
		+ C(\epsilon) |f|_{0,\mathcal{G}},
	\quad \forall f \in \H_{2+ \alpha}(\mathcal{G}),
\end{align}
where the constant $c$ does not depend on $\epsilon$.
\end{lemma}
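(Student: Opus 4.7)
The plan is to prove the interpolation inequality by the classical splitting argument based on the magnitude of the parabolic distance $r := |(x,t) - (y,s)|$. Since the full norm satisfies $|f|_{\alpha,\mathcal{G}} = |f|_{0,\mathcal{G}} + H_{\alpha,\mathcal{G}}(f)$, and the $|f|_{0,\mathcal{G}}$ summand is immediately absorbed into $C(\epsilon)|f|_{0,\mathcal{G}}$, the estimate reduces to bounding $H_{\alpha,\mathcal{G}}(f)$. I would split every pair $(x,t),(y,s) \in \mathcal{G}$ into a large-distance regime $r > \epsilon$, which is to give the $C(\epsilon)|f|_{0,\mathcal{G}}$ contribution, and a small-distance regime $r \leq \epsilon$, which is to yield the $c\epsilon^{1-\alpha}|f|_{2+\alpha,\mathcal{G}}$ contribution. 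Without loss of generality I restrict to $\epsilon \leq 1$, the opposite case being trivial since $|f|_{\alpha,\mathcal{G}} \leq |f|_{2+\alpha,\mathcal{G}}$.

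In the large-distance regime, the trivial estimate $|f(x,t)-f(y,s)| \leq 2|f|_{0,\mathcal{G}}$ gives at once $|f(x,t)-f(y,s)|/r^{\alpha} \leq 2\epsilon^{-\alpha}|f|_{0,\mathcal{G}}$, whence $C(\epsilon) = 2\epsilon^{-\alpha}$ suffices. In the small-distance regime I would decompose
$$f(x,t) - f(y,s) = \bigl[f(x,t) - f(y,t)\bigr] + \bigl[f(y,t) - f(y,s)\bigr]$$
and bound each piece separately. For the spatial increment, the mean-value inequality along a curve in $\mathcal{M}$ joining $x$ to $y$ gives $|f(x,t)-f(y,t)| \leq C|\nabla_\mathcal{M} f|_{0,\mathcal{G}}|x-y|$, and since $|x-y| \leq r \leq \epsilon$ and $|\nabla_\mathcal{M} f|_{0,\mathcal{G}} \leq |f|_{2+\alpha,\mathcal{G}}$, this is at most $C|f|_{2+\alpha,\mathcal{G}}\epsilon^{1-\alpha} r^\alpha$. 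For the temporal increment, $|f_t|_{0,\mathcal{G}} \leq |f|_{2+\alpha,\mathcal{G}}$ together with $|t-s| \leq r^2$ yields $|f(y,t)-f(y,s)| \leq |f|_{2+\alpha,\mathcal{G}} r^2 \leq |f|_{2+\alpha,\mathcal{G}}\epsilon^{2-\alpha} r^\alpha \leq |f|_{2+\alpha,\mathcal{G}}\epsilon^{1-\alpha} r^\alpha$, using $\epsilon \leq 1$. Adding the contributions from both regimes produces the desired bound on $H_{\alpha,\mathcal{G}}(f)$.

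The only mildly nontrivial point is the spatial mean-value step, which requires the intrinsic distance on $\mathcal{M}$ to be comparable to the ambient chord distance $|x-y|$. Since $\mathcal{M}$ is a closed, connected, embedded $C^2_1$ hypersurface, this comparability is local via a normal chart near each point and globalizes by a standard compactness-and-covering argument. A more streamlined alternative is to invoke the norm equivalence (\ref{norm_equivalence}) to transfer the problem to $f^l$ on the flat cylinder $\mathcal{G}_\delta \subset \mathbb{R}^{n+2}$, where the corresponding parabolic Hölder interpolation inequality is a classical result that can be quoted directly; passing back through (\ref{norm_equivalence}) then delivers (\ref{interpolation_estimate}) on $\mathcal{G}$ with only a harmless change in the constant $c$.
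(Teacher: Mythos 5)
Your proposal is correct, and your ``streamlined alternative'' at the end is essentially the paper's own argument. The paper also begins with the dichotomy $r \geq \epsilon$ versus $r < \epsilon$, treating the first case trivially, but in the second case it does not decompose the increment into spatial and temporal pieces on $\mathcal{M}$. Instead it passes immediately to the lift $f^l$ on the tubular neighbourhood $\mathcal{N}_\delta$: assuming WLOG $\epsilon < \min\{\tfrac{\delta}{2},1\}$, the entire Euclidean line segment $\lambda(x,t) + (1-\lambda)(y,s)$ between the two points stays inside $\overline{\mathcal{G}}_\delta$, so one applies the one-dimensional mean value theorem \emph{in the ambient flat space} to obtain
$|f(x,t)-f(y,s)| = |f^l(x,t)-f^l(y,s)| = |\nabla f^l(\xi,\chi)\cdot(x-y) + f^l_t(\xi,\chi)(t-s)| \leq c\,|f|_{2+\alpha,\mathcal{G}}\,|(x,t)-(y,s)|$,
using the norm equivalence (\ref{norm_equivalence}) to pass from $|f^l|_{2+\alpha,\mathcal{G}_\delta}$ back to $|f|_{2+\alpha,\mathcal{G}}$. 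This cleanly circumvents the one point you flag as ``mildly nontrivial'' in your first route, namely the comparability of the intrinsic distance on $\mathcal{M}$ with the chord distance $|x-y|$: by working with $f^l$ one never needs a curve inside the surface, only the straight segment in $\mathbb{R}^{n+1}$, which lies in the strip $\mathcal{N}_{\delta/2}$ once $|x-y|<\epsilon<\delta/2$. Your first route is therefore slightly more work (you must supply the compactness/covering argument for distance comparability), while the paper's lift-based route — which you identify correctly as the shortcut — gets the same bound with no surface-intrinsic geometry at all beyond what is already packaged in (\ref{norm_equivalence}).
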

\proof
Without loss of generality we can assume that $\epsilon < \min\{\frac{\delta}{2},1\}$.
In case that $|(x,t) - (y,s)| \geq \epsilon$, we directly obtain
$$
	\frac{|f(x,t) - f(y,s)|}{|(x,t) - (y,s)|^\alpha} 
		\leq \frac{2 |f|_{0,\mathcal{G}}}{\epsilon^\alpha}.
$$
For $|(x,t) - (y,s)| < \epsilon$, we have
$$
  \lambda (x,t) + (1 - \lambda) (y,s) \in \mathcal{N}_{\frac{\delta}{2}}
  		\times [0,T],
  		\quad \forall \lambda \in [0,1],
$$
and there is some $(\xi, \chi) = \lambda^\ast (x,t) + (1 - \lambda^\ast) (y,s)$
with $\lambda^\ast \in [0,1]$ such that
\begin{align*}
	| f(x,t) - f(y,s)| &= |f^l(x,t) - f^l(y,s) |
	= |\nabla f^l(\xi,\chi) \cdot (x-y) + f^l_t(\xi,\chi) (t-s)|
	\\
	& \leq |\nabla f^l|_{0,\mathcal{G}_\delta} | x - y | 
		+ |f^l_t|_{0,\mathcal{G}_\delta} | t - s |
	\\	
	&  \leq |\nabla f^l|_{0,\mathcal{G}_\delta} | x - y | 
		+ |f^l_t|_{0,\mathcal{G}_\delta} | t - s |^{\frac{1}{2}}
	\\	
	& \leq (|\nabla f^l|_{0,\mathcal{G}_\delta} + |f^l_t|_{0,\mathcal{G}_\delta}) |(x,t) - (y,s)|
	\\
	& \leq |f^l|_{2+ \alpha, \mathcal{G}_\delta} |(x,t) - (y,s)|
	\\
	& \leq c |f|_{2+ \alpha, \mathcal{G}} |(x,t) - (y,s)|,
\end{align*}
where we have used (\ref{norm_equivalence}) in the last step.
Hence, we obtain
\begin{align*}
	\frac{|f(x,t) - f(y,s)|}{|(x,t) - (y,s)|^\alpha}  
	\leq c |f|_{2+ \alpha, \mathcal{G}} |(x,t) - (y,s)|^{1 - \alpha}
	\leq c \epsilon^{1 - \alpha} |f|_{2+ \alpha, \mathcal{G}}.  
\end{align*}
\hfill $\Box$

\subsection{Reformulation on a stationary hypersurface}
In   Lemma \ref{advection_diffusion_eq_on_reference_domain} of this subsection we reformulate the equation on the 
moving space-time hypersurface $\mathcal{G}_{t}$ to the fixed
space-time cylinder $\mathcal{G}$. 
In order to do this we introduce a time-dependent, symmetric and positive definite map
$G: \overline{\mathcal{G}} \rightarrow \mathbb{R}^{(n+1) \times (n+1)}$  defined by
\begin{align}
\label{G_defi}
	G:= (G_{\alpha\beta})_{\alpha,\beta=1,\ldots,n+1} \quad \textnormal{with}
	\quad G_{\alpha\beta} := \D \alpha X \cdot \D \beta X + \nu_\alpha \nu_\beta.
\end{align}
Since $X$ is time-periodic, we have $G(0)=G(T)$.
We write $G^{\alpha\beta}$ for the components of the inverse $G^{-1}$. We explain in subsection \ref{arbmetric}  below how this method can be expanded to arbitrary inner products
on the tangent space $T_a \mathcal{M}$, or more precisely to 
arbitrary Riemannian metrics on $\mathcal{M}$.

This definition (\ref{G_defi}) is motivated by the following observations. For $a \in \mathcal{M}$ the map 
$\hat  G:=((\nabla_\mathcal{M} X)^T \nabla_\mathcal{M} X)(a): T_a\mathcal{M} \rightarrow T_a \mathcal{M}$
is a bijective linear map on $T_a \mathcal{M}$, because $X$ is an embedding.
By adding a term $(\nu \otimes \nu)(a)$ to $\hat G$ in the definition of $G(a)$ this map $\hat G$
is extended to a bijective map on $\mathbb{R}^{n+1}$.

Henceforward, the volume form $d\sigma(t)$ on $\Gamma(t)$ is given by the 
$n$-dimensional Hausdorff measure, whereas the volume form $do(t)$
on $\mathcal{M}$ is the corresponding volume form weighted by the density $\sqrt{\det G(t)}$.
That is $do(t) := \sqrt{\det G(t)} d\sigma$, where $d\sigma$
is the volume form on $\mathcal{M}$ induced by the 
$n$-dimensional Hausdorff measure.
We also use the notation $do_0$ instead of $do(0)$.
Because of the periodicity we have $do_0 = do(T)$.
From Jacobi's formula we immediately obtain that
$\frac{d}{dt} \sqrt{\det G(t)} 
= \frac{1}{2} \sqrt{\det G(t)} G^{\alpha\beta} G_{t \alpha \beta}(t)$
and hence
\begin{align}
\label{transport_formula_metrics}
 \frac{d}{dt} \int_{\mathcal{M}}{ \tilde{f}(\cdot,t) do(t)}
 = \int_{\mathcal{M}}{ \tilde{f}_t(\cdot,t) + \frac{1}{2} G^{\alpha\beta} G_{t \alpha \beta} \tilde{f}(\cdot,t)
 do(t)}.
\end{align}
Below, it will become clear, how this formula is related to the transport formula
on the moving hypersurface $\Gamma(t)$, see \cite{DE_acta},
\begin{align}
\label{transport_formula}
 \frac{d}{dt} \int_{\Gamma(t)}{ f(\cdot,t) d\sigma(t)}
 = \int_{\Gamma(t)}{ \partial^\bullet f(\cdot,t) +  (\nabla_{\Gamma(t)} \cdot v) f(\cdot,t)
 d\sigma(t)}.
\end{align}
\begin{lemma}
\label{advection_diffusion_eq_on_reference_domain}
Let $c \in \mathbb{R}$. Suppose $\mathfrak{c},f: \mathcal{G}_t \rightarrow \mathbb{R}$
and $\tilde{\mathfrak{c}},\tilde{f}: \mathcal{G} \rightarrow \mathbb{R}$ are such that
$\tilde{\mathfrak{c}} = \mathfrak{c} \circ X$ and $\tilde{f} = f \circ X$.
Then a function $u \in \H_{2 + \alpha}(\mathcal{G}_t)$ is a solution of
\begin{align*}
	& \Delta_{\Gamma(t)} u - \mathfrak{c} u - \partial^\bullet u = f
	\quad \textnormal{in} \quad \mathcal{G}_t,
	\\ 
	& \intbar_{\Gamma(0)} u(\cdot,0) d\sigma(0) = c,	
	\\ 
	& u(\cdot, 0) = u(\cdot,T) - \left( \intbar_{\Gamma(0)} u(\cdot,T) d\sigma(0) - c \right)
	\quad \textnormal{on} \quad \Gamma(0),
\end{align*}
if and only if $\tilde{u} = u \circ X \in \H_{2 + \alpha}(\mathcal{G})$ is a solution of
\begin{align*}
	& \Delta_{g(t)} \tilde{u} - \tilde{\mathfrak{c}} \tilde{u} - \tilde{u}_t = \tilde{f}
	\quad \textnormal{in} \quad \mathcal{G},
	\\ 
	& \intbar_{\mathcal{M}} \tilde{u}(\cdot,0) do_0 = c,	
	\\ 
	& \tilde{u}(\cdot, 0) = \tilde{u}(\cdot,T) - \left( \intbar_{\mathcal{M}} \tilde{u}(\cdot,T) do_0 - c \right)
	\quad \textnormal{on} \quad \mathcal{M},
\end{align*}
where the linear elliptic operator $\Delta_{g(t)}$ is given by
\begin{align}
\label{defi_laplacian_g}
	\Delta_{g(t)} \tilde{u} := \D \alpha \left( G^{\alpha \beta} \D \beta \tilde{u} \right)
	+ \frac{1}{2} P_{\alpha \gamma} G^{\gamma \eta} G^{\beta \rho}
	  \D \beta G_{\alpha \eta} \D \rho \tilde{u} 
\end{align}
\end{lemma}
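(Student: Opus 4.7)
The claim decomposes into four assertions: the PDE transforms correctly, the two side conditions (mean value and quasi-periodicity) transform correctly, and membership in the regularity class is preserved. Three of these are essentially definitional. The material derivative satisfies $\partial^\bullet u = (u\circ X)_t \circ X^{-1} = \tilde u_t \circ X^{-1}$ directly from its definition; the zero-order term transforms pointwise via $(\mathfrak c u)\circ X = \tilde{\mathfrak c} \tilde u$ from the hypothesis; and $u \in \H_{2+\alpha}(\mathcal G_t) \Leftrightarrow \tilde u \in \H_{2+\alpha}(\mathcal G)$ is built into the definition of $\H_{k+\alpha}(\mathcal G_t)$. For the side conditions, the choice $do(t) = \sqrt{\det G(t)}\, d\sigma$ makes $do(t)$ the pullback of $d\sigma(t)$ under $X(\cdot,t)$, so $\int_{\Gamma(t)} u(\cdot,t)\, d\sigma(t) = \int_{\mathcal M} \tilde u(\cdot,t)\, do(t)$ at every $t$; time-periodicity $X(\cdot,0)=X(\cdot,T)$ yields $do_0 = do(T)$ and $|\Gamma(0)| = \int_{\mathcal M} do_0$, which translates the mean-value identity and the pointwise trace relations on $\Gamma(0)$ verbatim to $\mathcal M$.

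The heart of the proof is the identity $(\Delta_{\Gamma(t)} u) \circ X = \Delta_{g(t)} \tilde u$. I would first establish the gradient identification $((\nabla_{\Gamma(t)} u)\circ X) = G^{\alpha\beta}(\D \alpha \tilde u)\D \beta X$: the chain rule gives $\D \alpha \tilde u = ((\nabla_{\Gamma(t)} u)\circ X)\cdot \D \alpha X$, and because $\nu$ is an eigenvector of $G$ with eigenvalue $1$ (the whole purpose of the rank-one augmentation in (\ref{G_defi})), $G^{-1}$ inverts the tangential part of this relation. This yields the weak form
\begin{align*}
\int_{\Gamma(t)} \nabla_{\Gamma(t)} u \cdot \nabla_{\Gamma(t)} \varphi\, d\sigma(t) = \int_{\mathcal M} G^{\alpha\beta}\, \D \alpha \tilde u\, \D \beta \tilde\varphi\, \sqrt{\det G}\, d\sigma
\end{align*}
for test functions $\varphi$ on $\Gamma(t)$ with pullback $\tilde\varphi = \varphi\circ X$. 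Integrating by parts on the closed hypersurface $\mathcal M$ and invoking Jacobi's formula $\D \beta \sqrt{\det G} = \frac{1}{2}\sqrt{\det G}\, G^{\alpha\eta}\D \beta G_{\alpha\eta}$ produces $\D \alpha(G^{\alpha\beta}\D \beta \tilde u) + \frac{1}{2}G^{\alpha\eta}G^{\beta\rho}(\D \beta G_{\alpha\eta})\D \rho \tilde u$. To match (\ref{defi_laplacian_g}) exactly, observe that $G^{\alpha\eta} - P_{\alpha\gamma}G^{\gamma\eta} = \nu_\alpha \nu_\eta$ (again using $G\nu=\nu$), and that $\nu_\alpha \nu_\eta \D \beta G_{\alpha\eta} = 0$: this follows by differentiating the identity $\nu_\alpha \nu_\eta G_{\alpha\eta} = 1$ and using $\nu_\alpha \D \beta \nu_\alpha = \frac{1}{2}\D \beta|\nu|^2 = 0$.

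The principal obstacle is to carry out the integration by parts and the gradient identification cleanly in the ambient indices. Because $\D \alpha$ is a tangential derivative on $\mathcal M$ rather than an intrinsic coordinate derivative, several curvature-type terms of the form $\mathcal{H}_{\alpha\eta}\nu_\beta\cdots$ arise through the commutator rule (\ref{commutator_rule}); they must be tracked and shown to collapse using $P_{\alpha\gamma}\nu_\gamma = 0$ and $\nu^{\mathcal M}_\alpha \D \alpha X = 0$. Once the identity $(\Delta_{\Gamma(t)} u)\circ X = \Delta_{g(t)} \tilde u$ is established, both directions of the ``if and only if'' follow at once, since every relation derived is a pointwise equality under the $C^2_1$-diffeomorphism $X$.
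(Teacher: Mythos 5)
Your proposal is correct, but for the central identity $(\Delta_{\Gamma(t)} u)\circ X = \Delta_{g(t)}\tilde u$ you take a genuinely different route from the paper. The paper proves this pointwise by brute force: it substitutes the gradient relation $\D \beta \tilde u = \D \beta X_\eta (\D \eta' u)\circ X$ and the projection identity $P'_{\rho\eta}\circ X = G^{\alpha\beta}\D \alpha X_\rho \D \beta X_\eta$ directly into the definition \eqref{defi_laplacian_g}, tracks all first- and second-derivative terms of $X$, and shows the remainder has the form $P_{\alpha\gamma}G^{\gamma\eta}G^{\beta\rho}(\D \beta\D \alpha X_\iota - \D \alpha\D \beta X_\iota)\D \eta X_\iota\D \rho X_\kappa(\D \kappa' u)\circ X$, which the commutator rule \eqref{commutator_rule} together with $G^{-1}\nu=\nu$ and $\nu_\alpha\D \alpha X = 0$ reduces to zero. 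You instead pass through the variational identity $\int_{\Gamma(t)}\nabla_{\Gamma(t)}u\cdot\nabla_{\Gamma(t)}\varphi\,d\sigma(t) = \int_{\mathcal M}G^{\alpha\beta}\D \alpha\tilde u\,\D \beta\tilde\varphi\,\sqrt{\det G}\,d\sigma$, integrate by parts on the closed hypersurface $\mathcal M$ (the surface-divergence boundary term vanishes because $G^{\alpha\beta}\nu_\beta\D \alpha\tilde u = \nu_\alpha\D \alpha\tilde u=0$), invoke Jacobi's formula, and then use $\nu_\alpha\nu_\eta\D \beta G_{\alpha\eta}=0$ to convert $G^{\alpha\eta}$ into $P_{\alpha\gamma}G^{\gamma\eta}$. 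This is essentially the Green's-formula computation the paper carries out separately in the arbitrary-metric subsection; using it here is cleaner because the curvature terms never materialize, at the cost of a weak-to-strong passage (harmless, since both sides are continuous and test functions are dense). One small inaccuracy in your account: in the weak-form route you actually sketch, the commutator rule and the curvature-type terms $\mathcal H_{\alpha\eta}\nu_\beta\cdots$ you flag as ``the principal obstacle'' do not arise at all --- they are a feature of the paper's direct pointwise computation, not of the integration-by-parts argument --- so that paragraph describes obstacles in a different proof than the one you set up. The definitional parts (material derivative, zero-order term, regularity class, transformation of the side conditions via $do(t)=\sqrt{\det G(t)}\,d\sigma$) match the paper, which dispatches them with ``the rest of the proof is obvious.''
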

\proof 
From the definition of the material derivative we have 
$ \tilde{u}_t = ( \partial^\bullet u ) \circ X$.
Now, let $\D \alpha' u$, $\alpha = 1, \ldots, n+1$, denote the components of the tangential
gradient $\nabla_{\Gamma(t)} u$. Then we have
\begin{align}
	\label{nabla_u_tilde}
	\D \beta \tilde{u} = \D \beta X_\eta (\D \eta'u) \circ X.
\end{align}
The projection $P'$ onto the tangent space of $\Gamma(t)$ satisfies
\begin{align}
	\label{P_prime}
	P'_{\rho \eta} \circ X = G^{\alpha \beta} \D \alpha X_\rho \D \beta X_\eta.
\end{align}
We thus obtain
\begin{align*}
	\Delta_{g(t)} \tilde{u} &=  G^{\alpha \beta} \D \alpha \D \beta \tilde{u} 
	-  G^{\alpha \gamma} G^{\beta \delta} \D \alpha G_{\gamma\delta} \D \beta \tilde{u} 
	+ \frac{1}{2} P_{\alpha \gamma} G^{\gamma \eta} G^{\beta \rho}
	  \D \beta G_{\alpha \eta} \D \rho \tilde{u}
	 \\
	 &=  G^{\alpha \beta} (\D \alpha X_\rho \D \beta X_\eta)(\D \rho' \D \eta' u)\circ X
	 + G^{\alpha \beta} \D \alpha \D \beta X_\eta (\D \eta' u) \circ X
	 \\
	 & \quad 
	 -  G^{\alpha \gamma} G^{\beta \delta} \D \alpha G_{\gamma\delta}
	 	 \D \beta X_\eta ( \D \eta' {u}) \circ X 
   		+ \frac{1}{2} P_{\alpha \gamma} G^{\gamma \eta} G^{\beta \rho}
		\D \beta G_{\alpha \eta} \D \rho X_\kappa (\D \kappa' {u}) \circ X
	\\
	&=  P'_{\rho \eta} \circ X (\D \rho' \D \eta' u)\circ X
	 + G^{\alpha \beta} \D \alpha \D \beta X_\eta (\D \eta' u) \circ X
	 \\
	 & \quad 
	 -  G^{\alpha \gamma} G^{\beta \delta} 
	 	\D \alpha \D \gamma X_{\rho} \D \delta X_{\rho} 
	 	 \D \beta X_\eta ( \D \eta' {u}) \circ X 
	 \\
	 & \quad 	 
	 -  G^{\alpha \gamma} G^{\beta \delta} 
	 	\D \gamma X_{\rho} \D \alpha \D \delta X_{\rho} 
	 	 \D \beta X_\eta ( \D \eta' {u}) \circ X 
	 \\
	 & \quad	
   	+ \frac{1}{2} P_{\alpha \gamma} G^{\gamma \eta} G^{\beta \rho}
	( \D \beta \D \alpha X_\iota \D \eta X_\iota  
	 +  \D \alpha X_\iota \D \beta \D \eta X_\iota ) 
	 \D \rho X_\kappa (\D \kappa' {u}) \circ X	
	 \\
	&= \Delta_{\Gamma(t)} u\circ X
	 + G^{\alpha \beta} \D \alpha \D \beta X_\eta (\D \eta' u) \circ X
	 -  G^{\alpha \gamma} 
	 	\D \alpha \D \gamma X_{\rho}  ( \D \rho' {u}) \circ X 
	  \\
	 & \quad 	
	 -  G^{\alpha \gamma} G^{\beta \delta} 
	 	\D \alpha \D \delta X_{\rho}  \D \gamma X_{\rho} 
	 	 \D \beta X_\eta ( \D \eta' {u}) \circ X 
	 \\
	 & \quad	
   	+  P_{\alpha \gamma} G^{\gamma \eta} G^{\beta \rho}
	  \D \beta \D \alpha X_\iota \D \eta X_\iota  
	 \D \rho X_\kappa (\D \kappa' {u}) \circ X	
	  \\
	&= \Delta_{\Gamma(t)} u\circ X
   	+  P_{\alpha \gamma} G^{\gamma \eta} G^{\beta \rho}
	  ( \D \beta \D \alpha X_\iota -  \D \alpha \D \beta X_\iota ) \D \eta X_\iota  
	 \D \rho X_\kappa (\D \kappa' {u}) \circ X,	
\end{align*}
where we have used the fact that $G^{-1} \nu = \nu$ and
that $P G^{-1}$ is symmetric. From the commutator rule (\ref{commutator_rule}) 
it then directly follows that
$$
	\Delta_{g(t)} \tilde{u} = \Delta_{\Gamma(t)} u\circ X.
$$
The rest of the proof is obvious.
\hfill $\Box$

Due to the result in Lemma \ref{advection_diffusion_eq_on_reference_domain}
it is sufficient to consider the periodic problem on the reference cylinder
$\mathcal{G}$.
In particular, if $\mathfrak{c} = \nabla_{\Gamma(t)} \cdot v$ then 
$\tilde{\mathfrak{c}}$ on $\mathcal{M}$ is given by 
$\tilde{\mathfrak{c}} = \frac{1}{2} G^{\alpha \beta} G_{t \alpha \beta}
=: \frac{1}{2} \mbox{tr}_{g(t)}(g_t)$.
This can be seen as follows
\begin{align*}
	\frac{1}{2} G^{\alpha\beta} G_{t \alpha \beta}
	&= \frac{1}{2} G^{\alpha\beta} \left( \D \alpha X_{t\eta} \D \beta X_\eta
						+ \D \alpha X_{\eta} \D \beta X_{t \eta} \right)
\\
	&=	\frac{1}{2} G^{\alpha\beta} 
	 \left(\D \alpha X_\kappa (\D \kappa' v_\eta) \circ X \D \beta X_\eta
		+ \D \alpha X_{\eta} \D \beta X_\kappa (\D \kappa' v_\eta) \circ X \right)			\\
	&= 	P'_{\kappa\eta} \circ X 	(\D \kappa' v_\eta) \circ X 
	= (\nabla_{\Gamma(t)} \cdot v) \circ X,	
\end{align*}
where we have used (\ref{nabla_u_tilde}) with $\tilde{u} = X_{t \eta}$
and $u = X_{t \eta} \circ X^{-1} = v_\eta$
as well as (\ref{P_prime}).
This also shows the connection between formula (\ref{transport_formula_metrics})
and the transport formula (\ref{transport_formula}).

\subsubsection{An arbitrary Riemannian metric}\label{arbmetric}
The main results of this paper in  Section \ref{Results} are also valid for 
$G: \overline{\mathcal{G}} \rightarrow \mathbb{R}^{(n+1) \times (n+1)}$ 
defined by
\begin{align}
\label{defi_extended_metric}
 &G(a,t) X \cdot Y := g(a,t)(P(a)X,P(a)Y) + (\nu(a) \cdot X)
 (\nu(a) \cdot Y),
 \\
 & \forall X,Y \in \mathbb{R}^{n+1}, \forall (a,t) \in \mathcal{M}\times [0,T],
 \nonumber
\end{align}
where $g(t)$ is an arbitrary (sufficiently smooth) 
time-dependent Riemannian metric on $\mathcal{M}$, see also \cite{HF} for
further details.
Indeed, $G(t)$ is a kind of Cartesian representation of the metric $g(t)$.
In particular, if we choose $g(t) := X^\ast h$ to be the (periodic) pull-back metric 
of the Riemannian metric $h$ on $\Gamma(t) \subset \mathbb{R}^{n+1}$ that is induced
by the Euclidean metric in $\mathbb{R}^{n+1}$,
then definition (\ref{defi_extended_metric}) coincides with definition (\ref{G_defi}).
Using integration by parts on closed hypersurfaces, see \cite{DE_acta},
leads to Green's formula for the operator $\Delta_{g(t)}$
\begin{align*}
	&\int_{\mathcal{M}}{ G^{-1}(t) \nabla_\mathcal{M} u  \cdot \nabla_{\mathcal{M}} w do(t)}
	= \int_{\mathcal{M}}{G^{\alpha \beta} \D \alpha u \D \beta w \sqrt{\det G} d\sigma}
\\	
	& \quad = - \int_{\mathcal{M}}{ u \D \alpha ( G^{\alpha \beta} \D \beta w ) \sqrt{\det G}
				+  u G^{\alpha \beta} \D \alpha \sqrt{\det G} \D \beta w d\sigma}
\\
	& \quad = - \int_{\mathcal{M}}{ u \D \alpha ( G^{\alpha \beta} \D \beta w ) \sqrt{\det G}
+  u \frac{1}{2} G^{\alpha \beta} G^{\rho \eta} \D \alpha G_{\rho \eta}  \D \beta w 
	\sqrt{\det G} d\sigma}				
\\
	& \quad = - \int_{\mathcal{M}}{ u \left( \D \alpha ( G^{\alpha \beta} \D \beta w )
+ \frac{1}{2} G^{\alpha \beta} G^{\rho \eta} \D \alpha G_{\rho \eta}  \D \beta w 
\right)  do(t)}	
\\
	& \quad = - \int_{\mathcal{M}}{u \Delta_{g(t)} w do(t)} ,
\end{align*}
where we have used the fact that
\begin{align*}
	G^{\rho \eta} \D \alpha G_{\rho \eta} 
	&= P_{\rho \gamma} G^{\gamma \eta} \D \alpha G_{\rho \eta}
		+ \nu_\rho \nu_\gamma G^{\gamma \eta} \D \alpha G_{\rho \eta}
\\
	&= P_{\rho \gamma} G^{\gamma \eta} \D \alpha G_{\rho \eta}
		+ \nu_\rho \nu_\eta \D \alpha G_{\rho \eta}
\\
	&= P_{\rho \gamma} G^{\gamma \eta} \D \alpha G_{\rho \eta}
		-  \D \alpha (\nu_\rho \nu_\eta)  G_{\rho \eta}			
\\		
	&= P_{\rho \gamma} G^{\gamma \eta} \D \alpha G_{\rho \eta}
		-  \D \alpha \nu_\rho \nu_\rho	
		-  \nu_\eta \D \alpha \nu_\eta 	
\\		
	&= P_{\rho \gamma} G^{\gamma \eta} \D \alpha G_{\rho \eta}.		
\end{align*}
The last identity also shows that
$\Delta_{g(t)} \tilde{u} = \D \alpha \left( G^{\alpha \beta} \D \beta \tilde{u} \right)
	+ \frac{1}{2} G^{\alpha \eta} G^{\beta \rho}
	  \D \beta G_{\alpha \eta} \D \rho \tilde{u} $.
However, we keep formula (\ref{defi_laplacian_g}),
since it is also valid, if we would replace (\ref{G_defi}) 
by $G_{\alpha\beta} := \D \alpha X \cdot \D \beta X + \lambda \nu_\alpha \nu_\beta$ with $\lambda: \mathcal{M} \rightarrow (0,+\infty)$
continuously differentiable.
It can be shown that the elliptic operator
$\Delta_{g(t)}$ defined in (\ref{defi_laplacian_g})
is the usual Laplace operator on $\mathcal{M}$ with respect to the Riemannian metric $g(t)$. 
In order to see this, assume that 
$\varphi: \Omega \subset \mathbb{R}^n \rightarrow \mathcal{M} \subset
\mathbb{R}^{n+1}$
is a local parametrization of $\mathcal{M}$ and let 
$\tilde{g}_{ij}(\theta) := \frac{\partial \varphi}{\partial \theta^i}(\theta) \cdot \frac{\partial \varphi}{\partial \theta^j}(\theta)$. Furthermore, let $\tilde{g}^{ij}$
denote the components of the inverse of $\tilde{g} = (\tilde{g}_{ij})_{i,j=1, \ldots,n}$.
Since 
$P_{\alpha\beta}(\varphi(\theta)) = \tilde{g}^{ij}(\theta) \frac{\partial \varphi_\alpha}{\partial \theta^i}(\theta) \frac{\partial \varphi_\beta}{\partial \theta^j}(\theta)$, 
we immediately obtain from definition (\ref{defi_extended_metric}) that
\begin{align}
\label{G_g}
	& G_{\alpha \beta}(\varphi(\theta),t) :=
	\frac{\partial \varphi_\alpha}{\partial \theta^i}(\theta) \tilde{g}^{ij}(\theta)
	g_{jk}(\theta,t) \tilde{g}^{kl}(\theta) 
	\frac{\partial \varphi_\beta}{\partial \theta^l}(\theta)
	+ \nu_{\alpha}(\varphi(\theta)) \nu_\beta(\varphi(\theta)),
\\	
\label{G_g_inv}
	& G^{\alpha \beta}(\varphi(\theta),t) :=
	\frac{\partial \varphi_\alpha}{\partial \theta^i}(\theta) g^{ij}(\theta,t)
	\frac{\partial \varphi_\beta}{\partial \theta^j}(\theta)
	+ \nu_{\alpha}(\varphi(\theta)) \nu_\beta(\varphi(\theta)),
\end{align}
where $g_{ij}(\theta,t) 
:= g(\varphi(\theta),t)\left(\frac{\partial \varphi}{\partial \theta^i}(\theta) , 
\frac{\partial \varphi}{\partial \theta^j}(\theta) \right)$ are the components of the metric 
with respect to local coordinates and 
$ (g^{ij}(\theta,t))_{i,j= 1, \ldots,n} = (g_{ij}(\theta,t))^{-1}_{i,j= 1, \ldots,n}$
are the components of the inverse matrix.
For a function $u: \mathcal{M} \rightarrow \mathbb{R}^{n+1}$ the tangential gradient on $\mathcal{M}$ satisfies
\begin{align*}
	(\nabla_{\mathcal{M}} u) \circ \varphi = \tilde{g}^{ij}
	  \frac{\partial U}{\partial \theta^i} \frac{\partial \varphi}{\partial \theta^j},
	  \quad \textnormal{on $\Omega$,}
\end{align*}
where $U:= u \circ \varphi$,
see for example in \cite{DDE}. Using relation (\ref{G_g_inv}), we obtain
\begin{align*}
 &\D \alpha ( G^{\alpha \beta} \D \beta u)(\varphi(\theta))
 = \tilde{g}^{ij}(\theta) \frac{\partial \varphi_\alpha}{\partial \theta^i}(\theta)
 \frac{\partial}{\partial \theta^j}
 \left( (G^{\alpha \beta} \D \beta u )(\varphi(\theta)) \right)
 \\
 &\quad = \tilde{g}^{ij}(\theta) \frac{\partial \varphi_\alpha}{\partial \theta^i}(\theta)
 \frac{\partial}{\partial \theta^j}
 \left( \frac{\partial \varphi_\alpha}{\partial \theta^m} g^{mn} \frac{\partial U}{\partial \theta^n} \right)(\theta)
 \\
 &\quad =  \frac{\partial}{\partial \theta^m}
 \left( g^{mn} \frac{\partial U}{\partial \theta^n} \right)(\theta)
 + \tilde{g}^{ij}(\theta) \frac{\partial \varphi_\alpha}{\partial \theta^i}(\theta)
 \cdot \frac{\partial^2 \varphi_\alpha}{\partial \theta^j \partial \theta^m}(\theta)
 g^{mn}(\theta) \frac{\partial U}{\partial \theta^n}(\theta).
\end{align*}
The same procedure gives
\begin{align*}
 &\left( P_{\alpha\gamma} G^{\gamma\eta} G^{\beta\rho} \D \beta G_{\alpha\eta} \D \rho u
 \right) \circ \varphi
 \\
 &\quad = \frac{\partial \varphi_\alpha}{\partial \theta^i} g^{ij} \frac{\partial \varphi_\eta}{\partial \theta^j} 
   \frac{\partial \varphi_\beta}{\partial \theta^m} g^{mn} \frac{\partial \varphi_\rho}{\partial \theta^n}
   \frac{\partial \varphi_\beta}{\partial \theta^k} \tilde{g}^{kl} 
   \frac{\partial ( G_{\alpha \eta} \circ \varphi ) }{\partial \theta^l}  
   \frac{\partial \varphi_\rho}{\partial \theta^s} \tilde{g}^{st} \frac{\partial U}{\partial \theta^t} 
   \\
 &\quad = \frac{\partial \varphi_\alpha}{\partial \theta^i} g^{ij} \frac{\partial \varphi_\eta}{\partial \theta^j} 
   g^{lt} 
   \frac{\partial ( G_{\alpha \eta} \circ \varphi ) }{\partial \theta^l}  
   \frac{\partial U}{\partial \theta^t} 
   \\
 &\quad = \frac{\partial \varphi_\alpha}{\partial \theta^i} g^{ij} \frac{\partial \varphi_\eta}{\partial \theta^j} 
   g^{lk} 
   \frac{\partial}{\partial \theta^l}
   \left(
   		\frac{\partial \varphi_\alpha}{\partial \theta^n} \tilde{g}^{nm}
   		g_{mu} \tilde{g}^{uv} \frac{\partial \varphi_\eta}{\partial \theta^v}
   		+ (\nu_\alpha \nu_\eta) \circ \varphi
   \right)  
   \frac{\partial U}{\partial \theta^k} 
   \\
 &\quad =  g^{mu} g^{lk} 
   \frac{\partial g_{mu}}{\partial \theta^l}
   \frac{\partial U}{\partial \theta^k}  
   + \frac{\partial \varphi_\alpha}{\partial \theta^i} g^{ij} \frac{\partial \varphi_\eta}{\partial \theta^j} 
   g^{lk} g_{mu}
   \frac{\partial}{\partial \theta^l}
   \left(
   		\frac{\partial \varphi_\alpha}{\partial \theta^n} \tilde{g}^{nm}
   		 \tilde{g}^{uv} \frac{\partial \varphi_\eta}{\partial \theta^v}
   \right)  
   \frac{\partial U}{\partial \theta^k}  
   \\
 &\quad = g^{mu} g^{lk} 
   \frac{\partial g_{mu}}{\partial \theta^l}
   \frac{\partial U}{\partial \theta^k}  
   + 2 \frac{\partial \varphi_\eta}{\partial \theta^u} 
   g^{lk} 
   \frac{\partial}{\partial \theta^l}
   \left(
   		 \tilde{g}^{uv} \frac{\partial \varphi_\eta}{\partial \theta^v}
   \right)  
   \frac{\partial U}{\partial \theta^k}  
   \\
 &\quad = g^{mu} g^{lk} 
   \frac{\partial g_{mu}}{\partial \theta^l}
   \frac{\partial U}{\partial \theta^k}  
   - 2 g^{lk} \tilde{g}^{uv} \frac{\partial \varphi_\eta}{\partial \theta^v}  
   \frac{\partial^2 \varphi_\eta}{\partial \theta^l \partial \theta^u} 
   \frac{\partial U}{\partial \theta^k}. 
\end{align*}
Altogether, we obtain 
\begin{align*}
&\left( \D \alpha ( G^{\alpha \beta} \D \beta u) 
	+ \frac{1}{2} 
	P_{\alpha\gamma} G^{\gamma\eta} G^{\beta\rho} \D \beta G_{\alpha\eta} \D \rho u
 \right) \circ \varphi
 \\
 &\quad = \frac{\partial}{\partial \theta^m}
 	\left( g^{mn} \frac{\partial U}{\partial \theta^n} \right)
 	+ \frac{1}{2} g^{mu} g^{lk} 
   \frac{\partial g_{mu}}{\partial \theta^l}
   \frac{\partial U}{\partial \theta^k}  
 \\
 &\quad = \frac{1}{\sqrt{\det (g_{ij})}}
   \frac{\partial}{\partial \theta^m}
   \left( 
		\sqrt{\det (g_{ij})} g^{mn} \frac{\partial U}{\partial \theta^n}  
   \right),
\end{align*}
which shows that $\Delta_{g(t)}$ is indeed the Laplace operator on $\mathcal{M}$
with respect to $g(t)$.

\subsection{Strong maximum principle}
The (strong) maximum principle for para\-bolic partial differential equations
in flat domains,
see for example in \cite{PW}, is also valid for parabolic partial differential
equations on closed hypersurfaces. 
\begin{lemma}
\label{strong_max_principle}
Suppose that the hypotheses of Theorem \ref{existence_theorem} hold
and that $\mathcal{M}$ is connected.
Furthermore, suppose that $\Delta_{g(t)} u - \mathfrak{c} u - u_t \geq 0$ in $\mathcal{G}$ and that $u(x^\ast, t^\ast) = \max_{\overline{\mathcal{G}}} u =: M$ 
for some $(x^\ast,t^\ast) \in \overline{\mathcal{G}}$ with $t^\ast >0$.
Then $u = M$ on $\mathcal{M} \times [0,t^\ast]$ if $\mathfrak{c} = 0$,
or if $\mathfrak{c} \geq 0$ and $M \geq 0$.
\end{lemma}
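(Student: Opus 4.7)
The plan is to reduce to the classical parabolic strong maximum principle of Nirenberg and Protter--Weinberger in flat Euclidean domains (see \cite{PW}) by working in local coordinate charts on $\mathcal{M}$, and then to propagate the resulting local conclusion to all of $\mathcal{M}$ using connectedness. Since $\mathcal{M}$ is compact, it admits a finite atlas of $C^{l}$ charts $\varphi_i : \Omega_i \subset \mathbb{R}^n \to \mathcal{M}$. Setting $U := u \circ \varphi_i$, the computation at the end of subsection~\ref{arbmetric} shows that in local coordinates
$$
(\Delta_{g(t)} u) \circ \varphi_i = \frac{1}{\sqrt{\det(g_{ij})}}\, \frac{\partial}{\partial \theta^m}\!\left( \sqrt{\det(g_{ij})}\, g^{mn} \frac{\partial U}{\partial \theta^n} \right),
$$
which is a uniformly elliptic second-order operator in $\theta$ with continuous, time-dependent coefficients. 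Hence in each chart the hypothesis $\Delta_{g(t)} u - \mathfrak{c} u - u_t \geq 0$ takes the form $a^{ij}(\theta,t)\partial_{ij} U + b^i(\theta,t)\partial_i U - \tilde{\mathfrak{c}}(\theta,t) U - U_t \geq 0$ with $(a^{ij})$ positive definite.

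Next I pick a chart around $x^\ast$, say with $\varphi_i(\theta^\ast) = x^\ast$, so that $U$ attains its maximum $M$ at the interior point $(\theta^\ast, t^\ast)$ with $t^\ast > 0$. The classical parabolic strong maximum principle \cite{PW} then applies: in the case $\mathfrak{c} = 0$ no sign condition on the zero-order term is required, whereas for $\mathfrak{c} \geq 0$ the standard refinement applies precisely when $M \geq 0$, which is exactly the assumed hypothesis. Either way, one concludes that $U \equiv M$ on the connected component of $\Omega_i$ containing $\theta^\ast$, for every $t \in [0, t^\ast]$.

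To globalize, I define
$$
S := \bigl\{ x \in \mathcal{M} \,:\, u(x,t) = M \text{ for all } t \in [0,t^\ast] \bigr\}.
$$
By continuity $S$ is closed in $\mathcal{M}$, and the local argument above (applied to any $x \in S$ in place of $x^\ast$, noting that each such $x$ is a maximum point at time $t^\ast$) shows that $S$ is open. Since $x^\ast \in S$ and $\mathcal{M}$ is connected, $S = \mathcal{M}$, which is exactly the claim that $u \equiv M$ on $\mathcal{M} \times [0, t^\ast]$.

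The principal obstacle is the simultaneous spatial and temporal spreading at the local step: the flat-domain strong maximum principle propagates the maximum along horizontal segments in $\theta$-space and backwards along vertical time-lines, and some iteration of these two moves is needed in order to fill the entire coordinate patch $\Omega_i \times [0,t^\ast]$ starting from a single point $(\theta^\ast, t^\ast)$. Once this local filling is in place, the passage from a coordinate patch to all of $\mathcal{M}$ via the clopen set $S$ is routine.
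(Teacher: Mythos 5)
Your proof is correct, but it takes a genuinely different route from the paper. You reduce to the flat-domain strong maximum principle by passing to local coordinate charts on $\mathcal{M}$, writing $\Delta_{g(t)}$ as the coordinate Laplace--Beltrami operator via the formula at the end of subsection~\ref{arbmetric}, and then globalizing by the clopen argument on the connected surface $\mathcal{M}$. The paper instead invokes the lift to the tubular neighbourhood $\mathcal{N}_\delta$ constructed in the Appendix: the operator $L$ on $\mathcal{M}$ is replaced by the non-degenerate parabolic operator $\hat{L}$ on the flat strip $\mathcal{N}_\delta \subset \mathbb{R}^{n+1}$ satisfying $\hat{L}u^l(x,t) = Lu(a(x),t)$, and the classical strong maximum principle is applied directly on $\mathcal{G}_\delta = \mathcal{N}_\delta\times(0,T)$; the final step (openness of the set $S$ where the maximum is attained, plus closedness and connectedness) is the same in both arguments. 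The paper's route is deliberately chosen to avoid local parametrisations -- one of the stated aims of the article -- and reuses machinery already set up for the Schauder theory, while yours is the textbook Riemannian-geometry reduction and is somewhat more self-contained for a reader unfamiliar with the narrow-band extension. One small remark on your write-up: you should state explicitly that one application of the flat strong maximum principle at $(\theta^\ast,t^\ast)$ already places $x^\ast$ in your set $S$ (i.e. gives $u(x^\ast,t)=M$ for all $t\in[0,t^\ast]$, with $t=0$ by continuity); as written you implicitly assume $x^\ast\in S$ when invoking connectedness. This is a cosmetic point and does not affect correctness.
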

\begin{proof}
We use the maximum principle in flat domains by observing that
a linear parabolic operator $L$ on $\mathcal{M} \subset \mathbb{R}^{n+1}$ 
can be extended
to a linear parabolic operator $\hat{L}$ on an open strip
$\mathcal{N}_\delta \subset \mathbb{R}^{n+1}$ about $\mathcal{M}$ such that
$$
	\hat{L} u^l(x,t) = L u(a(x),t), \quad \forall (x,t) \in \mathcal{N}_\delta \times [0,T],
$$
see in the Appendix for more details. Hence, if $Lu \geq 0$, we also have 
$\hat{L} u^l \geq 0$. 
Moreover, $u(x^\ast,t^\ast) = \max_{\overline{\mathcal{G}}} u =: M$
for some $(x^\ast,t^\ast) \in \overline{\mathcal{G}}$ if and only if 
$u^l(x^\ast,t^\ast) = \max_{\overline{\mathcal{G}}_\delta} u^l$. 
From the strong maximum principle in flat domains, see for example \cite{PW},
it therefore follows that
the set $S := \{ x \in \mathcal{M} ~|~ u(x,t) = M \}$ for any fixed $t \in [0,T]$
must be open, provided that the zero-order term $\mathfrak{c}$
satisfies $\mathfrak{c}=0$ or that $\mathfrak{c} \geq 0$ and $M \geq 0$. 
Since $S$ is also closed and $\mathcal{M}$
is connected, we have either $S = \emptyset$
or $S = \mathcal{M}$.
\end{proof}
\section{Periodic solutions: Results}\label{Results}
The starting point for the study of time-periodic solutions on hypersurfaces
is the following result on the existence and uniqueness of solutions to the
corresponding initial value problem. 

\begin{theorem}
\label{existence_theorem}
Let $\mathcal{M} \subset \mathbb{R}^{n+1}$ be a closed, orientable, 
$n$-dimensional hypersurface of class
$C^{3}_1$, and let $g(t)$, $t \in [0,T]$, be a family of Riemannian metrics on $\mathcal{M}$
such that the map $G$ defined in (\ref{defi_extended_metric}) is
of class $\H_{1+\alpha}(\mathcal{G})$ for some $\alpha \in (0,1)$. 
Furthermore, let $\mathfrak{c} \in \H_\alpha (\mathcal{G})$.
Then for any $f \in \H_\alpha (\mathcal{G})$ and $u_0 \in C^0(\mathcal{M})$ there is a unique solution
of 
\[
(S_I) = \left\{
\begin{aligned}
&	\Delta_{g(t)} u - \mathfrak{c} u - u_t = f \quad 
	\textnormal{in} \quad \mathcal{M} \times (0,T),
	\\
&   u(\cdot,0) = u_0 \quad \textnormal{on} \quad \mathcal{M}.	
\end{aligned} 
\right.
\]
If $u_0 \in \H_{2 + \alpha}(\mathcal{M})$, then $u \in \H_{2 + \alpha}(\mathcal{G})$ and there is 
a constant $C$ such that 
\begin{align}
 |u|_{2+\alpha, \mathcal{G}} \leq C ( |f|_{\alpha,\mathcal{G}} + |u_0|_{2 + \alpha,\mathcal{M}} ).
 \label{a_priori_estimate_1}
\end{align}
\end{theorem}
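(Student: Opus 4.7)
The plan is to reduce the surface problem $(S_I)$ to a uniformly parabolic initial-boundary value problem on the flat tubular cylinder $\mathcal{N}_\delta \times (0,T)$ and then to invoke the classical Schauder theory for oblique-derivative problems in \cite{GL}. First I would lift $u$ by $u^l(x,t) := u(a(x),t)$ and use the relations (\ref{nabla_f_l})--(\ref{nabla_Gamma_f_l}) together with the formula (\ref{defi_laplacian_g}) for $\Delta_{g(t)}$ to rewrite $(S_I)$ as an equation $\hat L u^l = f^l$ on $\mathcal{N}_\delta\times(0,T)$. Because $G^{-1}\nu=\nu$, the extended operator is degenerate in the normal direction, so following the Appendix I would add a non-degenerate normal term (for example $\partial^2_{\nu\nu}$), obtaining a uniformly parabolic operator $\widetilde L$ that agrees with $\Delta_{g(t)}$ on functions constant along normals. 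The condition $\partial_\nu u^l = 0$ on $\partial \mathcal{N}_\delta \times (0,T)$ is automatic for any lift, and together with $u^l(\cdot,0)=u_0^l$ this gives a genuine Neumann-type initial-boundary value problem on $\mathcal{N}_\delta$.

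The hypothesis $G \in \H_{1+\alpha}(\mathcal{G})$ ensures that the coefficients of $\widetilde L$ lie in $\H_\alpha(\mathcal{N}_\delta \times (0,T))$, which is the right regularity for parabolic Schauder theory. After a finite partition of unity and local flattening of $\partial \mathcal{N}_\delta$, each chart becomes the half-space oblique-derivative problem treated in Theorem 5.18 of \cite{GL}, yielding for $u_0^l \in \H_{2+\alpha}$ a unique classical solution with the expected Schauder bound. For merely continuous $u_0$ I would approximate by smooth data, solve, and pass to the limit using the maximum principle of Lemma \ref{strong_max_principle} to control the sup-norm.

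The chief obstacle is that $\mathfrak{c}$ is not assumed non-negative, so the standard coercivity argument that fuels existence via Fredholm alternative in \cite{GL} does not apply out of the box. I would circumvent this by the exponential substitution $u(x,t) = e^{\lambda t} w(x,t)$ for $\lambda > |\mathfrak{c}|_{0,\mathcal{G}}$, which converts $(S_I)$ into an equation with zero-order coefficient $\mathfrak{c} + \lambda \ge 0$ for which the Lieberman theory applies directly; because $T$ is finite and the exponential factor is smooth, this substitution preserves all of the $\H_{k+\alpha}$ norms up to multiplicative constants. Uniqueness then follows from the same maximum principle applied to the difference of two solutions (after the same shift).

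Finally, the Schauder estimate for $w$ on $\mathcal{N}_\delta \times (0,T)$ transfers via the norm equivalence (\ref{norm_equivalence}) and the bound $|f^l|_{\alpha,\mathcal{G}_\delta}\le C|f|_{\alpha,\mathcal{G}}$ to the desired a priori estimate (\ref{a_priori_estimate_1}) on $\mathcal{G}$. The main technical bookkeeping will lie in verifying that the lift genuinely sends the surface problem to an oblique-derivative problem with $\H_\alpha$-coefficients (this needs the commutator estimates for the Weingarten map arising from differentiating $A=\unit-d\H$ and its inverse), and in ensuring that the non-degenerate normal extension can be chosen so that the boundary operator on $\partial\mathcal{N}_\delta$ is non-tangential; once these geometric preliminaries are set up, the remainder is a direct application of established flat-domain theory.
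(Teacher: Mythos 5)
Your proposal follows essentially the same route as the paper: lift via $u^l(x,t)=u(a(x),t)$ to the tubular neighbourhood $\mathcal{N}_\delta$, augment the degenerate operator with a $\partial^2_{\nu\nu}$ term to obtain the non-degenerate Neumann problem $(S^l)$ from the Appendix, invoke the Schauder existence and estimate theory for the oblique-derivative initial-boundary value problem (the paper cites Theorem~5.18 of \cite{GLbook}, which is what you mean — not the time-periodic paper \cite{GL}), and transfer the estimate back via the norm equivalence (\ref{norm_equivalence}). Two small remarks: first, the sign of $\mathfrak{c}$ is in fact immaterial for the \emph{initial}-value problem on a finite time interval — the exponential shift $u = e^{\lambda t}w$ you describe is the standard reason why, and the paper simply takes this for granted; the sign of $\mathfrak{c}$ only becomes a substantive obstruction for the \emph{periodic} problem treated later. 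Second, when you pass back from the solution $\hat u$ of $(S^l)$ to a solution of $(S_I)$, you need the normal-averaging construction $u(a,t)=\frac{1}{2\delta}\int_{-\delta}^{\delta}\hat u(a+s\nu(a),t)\,ds$ from the Appendix (and then the uniqueness of $(S^l)$ to conclude $\hat u = u^l$); you reference the Appendix, so you are aware of this, but it is the one step that should be stated explicitly rather than left as "automatic."
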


Using this result and a fixed point argument, it is possible to prove 
the existence of periodic solutions 
for advection-diffusion equations on $\mathcal{M}$ 
with an explicit lower bound on the zero-order term $\mathfrak{c}$.

\begin{proposition}
\label{prop_fix_point_iteration}
Let $\mathcal{M} \subset \mathbb{R}^{n+1}$ be a closed, orientable and connected,
$n$-dimensional hypersurface of class $C^{3}_1$, 
and let $g(t)$, $t \in [0,T]$, be a family of Riemannian metrics on $\mathcal{M}$ 
such that the map $G$ defined in (\ref{defi_extended_metric}) is of class 
$\H_{1+ \alpha}(\mathcal{G})$ for some $\alpha \in (0,1)$. 
Furthermore, let $\mathfrak{c}, f \in \H_\alpha(\mathcal{G})$.
If $\mathfrak{c} \geq c_0 > \frac{\ln 2}{T}$, then there is a unique solution
$u \in \H_{2 + \alpha}(\mathcal{G})$ of
\[
(S_P) = \left\{
\begin{aligned}
	& \Delta_{g(t)} u - \mathfrak{c} u - {u_t} = f \quad \textnormal{in} \quad \mathcal{M} \times (0,T),
	\\
	& \intbar_{\mathcal{M}}{u(\cdot, 0) do_0} = 0,
	\\
	& u(\cdot,0) = u(\cdot, T) - \intbar_{\mathcal{M}}{u(\cdot,T)do_0},
	\quad \textnormal{on} \quad \mathcal{M}
\end{aligned}
\right.
\]
with 
\begin{align}
	|u|_{2 + \alpha, \mathcal{G}} \leq C |f|_{\alpha,\mathcal{G}},
	\label{a_priori_estimate_2}
\end{align}
for some constant $C$ depending on $\mathcal{M}$, $g$ and $\mathfrak{c}$.
\end{proposition}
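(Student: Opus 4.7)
The plan is to recast $(S_P)$ as a fixed point problem for the Poincaré (period) map on the space of admissible initial data and to exploit the strict lower bound on $\mathfrak{c}$ to ensure contractivity. Concretely, let
\[
X := \{ \psi \in C^0(\mathcal{M}) : \intbar_{\mathcal{M}} \psi \, do_0 = 0 \},
\]
equipped with the sup norm. For each $\psi \in X$, Theorem \ref{existence_theorem} supplies a unique solution $u^\psi$ of $(S_I)$ with initial datum $\psi$, and I define
\[
\Phi(\psi) := u^\psi(\cdot,T) - \intbar_{\mathcal{M}} u^\psi(\cdot,T) \, do_0 \ \in\ X.
\]
A fixed point of $\Phi$ immediately produces a solution of $(S_P)$, because the mean condition at $t=0$ is built into $X$ and the relaxed periodicity condition is exactly $\psi = \Phi(\psi)$.

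The contraction estimate will be obtained from the maximum principle via an exponential rescaling. For $\psi_1,\psi_2 \in X$, the difference $w := u^{\psi_1} - u^{\psi_2}$ solves the homogeneous equation $\Delta_{g(t)} w - \mathfrak{c} w - w_t = 0$. Setting $v := e^{c_0 t} w$, a direct computation shows $\Delta_{g(t)} v - (\mathfrak{c} - c_0) v - v_t = 0$ with $\mathfrak{c} - c_0 \geq 0$, so the (weak form of the) maximum principle in Lemma \ref{strong_max_principle} applied to $\pm v$ yields $|v(\cdot,t)|_{0,\mathcal{M}} \leq |\psi_1 - \psi_2|_{0,\mathcal{M}}$, hence
\[
|w(\cdot,T)|_{0,\mathcal{M}} \leq e^{-c_0 T} |\psi_1 - \psi_2|_{0,\mathcal{M}}.
\]
Subtracting the mean value inflates the sup norm by at most a factor of $2$, so $\|\Phi(\psi_1) - \Phi(\psi_2)\|_\infty \leq 2 e^{-c_0 T} \|\psi_1 - \psi_2\|_\infty$, and the assumption $c_0 > (\ln 2)/T$ is precisely what makes $2 e^{-c_0 T} < 1$. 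The Banach fixed point theorem then produces a unique $\psi^\ast \in X$ with $u := u^{\psi^\ast}$ solving $(S_P)$, and uniqueness in $(S_P)$ follows from the same contraction applied to the difference of two candidate solutions.

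To upgrade the fixed point $\psi^\ast$ from $C^0(\mathcal{M})$ to $\H_{2+\alpha}(\mathcal{M})$, I will appeal to interior parabolic regularity: on $\mathcal{M} \times [T/2, T]$ the coefficients lie in $\H_\alpha$ and the solution $u$ is already bounded, so the Schauder theory used to prove Theorem \ref{existence_theorem} (transferred to a flat neighbourhood as in the Appendix) gives $u(\cdot,T) \in \H_{2+\alpha}(\mathcal{M})$. Hence $\psi^\ast = u(\cdot,T) - \intbar u(\cdot,T) \, do_0 \in \H_{2+\alpha}(\mathcal{M})$, and Theorem \ref{existence_theorem} then places $u$ in $\H_{2+\alpha}(\mathcal{G})$. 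The a priori bound (\ref{a_priori_estimate_2}) is obtained by iterating: starting from $u^{(0)} := 0$, estimate (\ref{a_priori_estimate_1}) gives $|u^{(1)}|_{2+\alpha,\mathcal{G}} \leq C|f|_{\alpha,\mathcal{G}}$, and summing the geometric series with ratio $2 e^{-c_0 T} < 1$ yields $|\psi^\ast|_{0,\mathcal{M}} \leq C' |f|_{\alpha,\mathcal{G}}$.

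The main obstacle I expect is turning the $C^0$-level contraction into the $\H_{2+\alpha}$ bound on $\psi^\ast$ required to close (\ref{a_priori_estimate_2}); the iteration does not directly contract in the $\H_{2+\alpha}$ norm because the constant in (\ref{a_priori_estimate_1}) need not be compatible with the factor $2e^{-c_0 T}$. I anticipate resolving this by combining the $C^0$ bound on $u$ with an interior Schauder estimate on $[T/2, T]$ that controls $|u(\cdot,T)|_{2+\alpha,\mathcal{M}}$ by $|u|_{0,\mathcal{G}} + |f|_{\alpha,\mathcal{G}}$, and then invoking (\ref{a_priori_estimate_1}) one last time; alternatively, having established existence and uniqueness, one can read off (\ref{a_priori_estimate_2}) from the open mapping theorem applied to the continuous bijection $u \mapsto f$ on the closed subspace of $\H_{2+\alpha}(\mathcal{G})$ cut out by the two mean conditions.
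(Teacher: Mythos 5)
Your existence-and-uniqueness step is essentially the paper's: both set up the period map on the zero-mean subspace of $C^0(\mathcal{M})$, both obtain contractivity from the maximum principle (you use the tidy change of unknown $v=e^{c_0 t}w$, the paper compares $\pm w$ against the explicit supersolution $|w|_{0,\mathcal{M}}e^{-\epsilon t}$ for $\epsilon\in(\ln 2/T,c_0)$; these are interchangeable), and both observe that subtracting the mean at most doubles the constant, so $c_0>\ln 2/T$ is exactly what makes the projected period map $\Phi=K$ contractive. The genuine divergence is in how the $\H_{2+\alpha}$ regularity and the estimate \eqref{a_priori_estimate_2} are obtained. You propose to invoke an interior parabolic Schauder theorem to conclude $u(\cdot,T)\in \H_{2+\alpha}(\mathcal{M})$ for the $C^0\cap C^{2,1}$ fixed-point solution, then close via \eqref{a_priori_estimate_1} or the open mapping theorem. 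That is a legitimate route, but it rests on an interior regularity result (not just an a priori bound) for a $C^{2,1}$ solution with merely continuous initial data, which the paper never states and would have to be imported from Lieberman's book after transfer to the Neumann problem on $\mathcal{N}_\delta$. The paper instead stays entirely within Theorem \ref{existence_theorem}: it iterates $w_{k+1}=K(w_k)$ from $w_0=0$, notes each $J_0(w_k)\in\H_{2+\alpha}(\mathcal{G})$ by induction, and then produces an interior-type estimate \emph{for these already-regular iterates} by applying \eqref{a_priori_estimate_1} to the cut-off difference $\zeta\,(J_0(w_{k+1})-J_0(w_k))$, which has zero initial data; combined with the interpolation inequality \eqref{interpolation_estimate} and the $C^0$ contraction, this yields a recursion
\[
|J_0(w_{k+2})-J_0(w_{k+1})|_{2+\alpha,\mathcal{G}} \le \theta_K\,|J_0(w_{k+1})-J_0(w_k)|_{2+\alpha,\mathcal{G}} + C^\ast\,|w_{k+1}-w_k|_{0,\mathcal{M}}
\]
whose summation shows $\{J_0(w_k)\}$ is Cauchy in $\H_{2+\alpha}(\mathcal{G})$, giving both the regularity of $u^\ast$ and the bound \eqref{a_priori_estimate_2} in one stroke. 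What your approach buys is conceptual brevity (regularity from a standard interior theorem, the estimate possibly from the open mapping theorem); what the paper's buys is self-containedness — no appeal to an external interior regularity theorem for rough initial data, and the cutoff-plus-interpolation recursion is exactly the quantitative substitute for that theorem. You correctly identified the $C^0\to\H_{2+\alpha}$ upgrade as the main obstacle; the cutoff iteration on differences with vanishing initial data is how the paper resolves it without leaving the toolkit already established.
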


This result is sufficient to establish conditional existence
for the periodic problem without a lower bound on $\mathfrak{c}$.

\begin{proposition}[Fredholm alternative]
\label{fredholm_alternative}
Suppose that the hypotheses of Proposition \ref{prop_fix_point_iteration} hold, 
then either the homogeneous problem
\begin{align*}
	& \Delta_{g(t)} u - \mathfrak{c} u - {u_t} = 0 \quad \textnormal{in} \quad \mathcal{M} \times (0,T),
	\\
	& \intbar_{\mathcal{M}}{u(\cdot,0) do_0} = 0,
	\\
	& u(\cdot, 0) = u(\cdot, T) - \intbar_\mathcal{M}{ u(\cdot,T) do_0}
	\quad \textnormal{on} \quad \mathcal{M},
\end{align*}
has zero as its only solution, in which case the problem
\begin{align*}
	& \Delta_{g(t)} u - \mathfrak{c} u - {u_t} = f \quad \textnormal{in} \quad \mathcal{M} \times (0,T),
	\\
	& \intbar_{\mathcal{M}}{u(\cdot, 0) do_0} = c,
	\\
	& u(\cdot, 0) = u(\cdot, T) - \left( \intbar_{\mathcal{M}}{u(\cdot,T) do_0} - c \right)
	\quad \textnormal{on} \quad \mathcal{M},
\end{align*}
is solvable in the class $\H_{2 + \alpha}(\mathcal{G})$ 
for all $f \in \H_\alpha(\mathcal{G})$ and $c \in \mathbb{R}$, 
or the homogeneous problem has non-zero solutions, in which case
the non-homogeneous problem cannot be solved for some choices of
$f \in \H_\alpha(\mathcal{G})$ and $c \in \mathbb{R}$. 
\end{proposition}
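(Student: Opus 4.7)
The plan is to apply the Riesz--Schauder Fredholm alternative for compact perturbations of the identity to an equivalent fixed-point equation. As a preliminary reduction I observe that the inhomogeneous problem with a prescribed mean $c$ reduces to the case $c = 0$: writing $v := u - c$, one checks that $v$ solves the same periodic problem with modified right-hand side $\tilde f := f + \mathfrak{c}\, c \in \H_\alpha(\mathcal{G})$, zero initial mean, and the same relaxed periodicity condition (since $\Delta_{g(t)} c = 0$ and the constants cancel in the periodicity condition). It therefore suffices to prove the Fredholm alternative with $c = 0$.

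Next, I choose a constant $K > 0$ so large that $\mathfrak{c}_K := \mathfrak{c} + K \geq \frac{\ln 2}{T} + 1$ uniformly on $\mathcal{G}$; Proposition \ref{prop_fix_point_iteration}, applied with $\mathfrak{c}_K$ in place of $\mathfrak{c}$, then furnishes a bounded linear solution operator $T : \H_\alpha(\mathcal{G}) \to \H_{2+\alpha}(\mathcal{G})$ satisfying the estimate (\ref{a_priori_estimate_2}). Let
\[
Y_0 := \Big\{ w \in \H_{2+\alpha}(\mathcal{G}) \; : \; \intbar_\mathcal{M} w(\cdot,0)\, do_0 = 0,\ w(\cdot,0) = w(\cdot,T) - \intbar_\mathcal{M} w(\cdot,T)\, do_0 \Big\},
\]
a closed subspace of $\H_{2+\alpha}(\mathcal{G})$. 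The operator $T$ is a Banach space isomorphism from $\H_\alpha(\mathcal{G})$ onto $Y_0$, with inverse $w \mapsto \Delta_{g(t)} w - \mathfrak{c}_K w - w_t$. Rewriting $\Delta_{g(t)} u - \mathfrak{c} u - u_t = f$ as $\Delta_{g(t)} u - \mathfrak{c}_K u - u_t = f - K u$, the periodic problem with $c = 0$ is equivalent to the fixed-point equation $(I + KT)\, u = T f$ in $Y_0$.

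The central analytic step is to verify that $KT : Y_0 \to Y_0$ is compact when $Y_0$ is endowed with the $\H_{2+\alpha}$-norm. Given a bounded sequence $(u_n) \subset Y_0$, the uniform Hölder bounds on $u_n$, $\nabla_\mathcal{M} u_n$ and $\partial_t u_n$ supply the equicontinuity required for an Arzel\`a--Ascoli argument in the parabolic metric, yielding a subsequence $u_{n_k} \to u$ in $\H_\alpha(\mathcal{G})$; continuity of $T : \H_\alpha \to \H_{2+\alpha}$ then gives $T u_{n_k} \to T u$ in $\H_{2+\alpha}$, hence in $Y_0$. By the Riesz--Schauder theorem, $I + KT$ is Fredholm of index zero on $Y_0$, so injectivity is equivalent to surjectivity.

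To finish I identify the kernel: $(I + KT) u = 0$ in $Y_0$ is equivalent to $u = -K T u$, which by the defining property of $T$ says exactly that $u \in Y_0$ solves $\Delta_{g(t)} u - \mathfrak{c} u - u_t = 0$ with the homogeneous constraints; thus $\ker(I + KT)$ coincides with the solution space of the homogeneous periodic problem. If this kernel is trivial, surjectivity of $I + KT$ on $Y_0$ provides, for every $f \in \H_\alpha(\mathcal{G})$, a unique $u \in Y_0$ with $(I + KT) u = T f$, giving the required solution (the general $c \in \mathbb{R}$ case then follows by the reduction above). If the kernel is non-trivial, surjectivity fails; since $T$ maps $\H_\alpha(\mathcal{G})$ onto $Y_0$, there exists $f \in \H_\alpha(\mathcal{G})$ with $T f$ outside the range of $I + KT$, producing a non-solvable instance. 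The principal obstacle I anticipate is the careful verification of the Arzel\`a--Ascoli compactness step in the anisotropic parabolic Hölder setting, ensuring that the seminorm $\langle \cdot \rangle_{1 + \alpha,\mathcal{G}}$ together with the Hölder seminorms on spatial derivatives supply all of the equicontinuity needed to pass to a convergent subsequence in $\H_\alpha(\mathcal{G})$.
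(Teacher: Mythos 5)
Your proposal is correct and takes essentially the same approach as the paper: reduce to $c=0$, invert a shifted auxiliary operator via Proposition~\ref{prop_fix_point_iteration}, and reformulate the periodic problem as a compact perturbation of the identity so that Riesz--Schauder theory applies. The only cosmetic differences are that the paper replaces the zero-order coefficient by the fixed constant $1/T$ (rather than $\mathfrak{c}+K$) and poses the fixed-point equation $u + \mathcal{K}u = L_T^{-1}f$ in $\H_\alpha(\mathcal{G})$, whereas you work in the subspace $Y_0 \subset \H_{2+\alpha}(\mathcal{G})$; both versions hinge on the same compact embedding of $\H_{2+\alpha}(\mathcal{G})$ into $\H_\alpha(\mathcal{G})$.
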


Using this Proposition, one can prove existence for 
the special choice $\mathfrak{c}=0$.

\begin{corollary}
\label{cor_existence}
Suppose that the hypotheses of Proposition \ref{prop_fix_point_iteration} hold.
Then for all $f \in \H_\alpha(\mathcal{G})$ and $c \in \mathbb{R}$ 
there exists a unique solution 
$u \in \H_{2 + \alpha}(\mathcal{G})$ of
\begin{align*}
	& \Delta_{g(t)} u - {u_t} = f \quad \textnormal{in} \quad \mathcal{M} \times (0,T),
	\\
	& \intbar_{\mathcal{M}}{ u(\cdot, 0) do_0} = c,
	\\
	& u(\cdot, 0) = u(\cdot, T) - \left( \intbar_{\mathcal{M}}{u(\cdot,T) do_0} - c \right)
	\quad \textnormal{on} \quad \mathcal{M}.
\end{align*}
\end{corollary}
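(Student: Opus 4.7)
My plan is to invoke the Fredholm alternative of Proposition \ref{fredholm_alternative}: it is enough to prove that with $\mathfrak{c} \equiv 0$ the homogeneous problem ($f=0$, $c=0$) admits only $u \equiv 0$; existence and uniqueness of the non-homogeneous problem for arbitrary $f \in \H_\alpha(\mathcal{G})$ and $c \in \mathbb{R}$ then follow at once from that dichotomy.

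Let $u \in \H_{2+\alpha}(\mathcal{G})$ satisfy the homogeneous problem and write $M := \max_{\overline{\mathcal{G}}} u$ and $m := \min_{\overline{\mathcal{G}}} u$. I would first observe that, since $\mathfrak{c} \equiv 0$ and $\mathcal{M}$ has no boundary, Lemma \ref{strong_max_principle} applied to $u$ and to $-u$ yields the weak extremum principles $M = \max_\mathcal{M} u(\cdot,0)$ and $m = \min_\mathcal{M} u(\cdot,0)$: any extremum attained at some $t^\ast > 0$ would, by the strong version, propagate backward in time to $t=0$. Setting $C := \intbar_\mathcal{M} u(\cdot,T) do_0$, the periodic condition $u(\cdot,0) = u(\cdot,T) - C$ then gives
\begin{align*}
M = \max_\mathcal{M} u(\cdot,T) - C \leq M - C, \qquad m = \min_\mathcal{M} u(\cdot,T) - C \geq m - C,
\end{align*}
which forces $C = 0$, and therefore $u(\cdot,0) = u(\cdot,T)$ on $\mathcal{M}$.

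The final step is a second application of Lemma \ref{strong_max_principle}: since $M = \max_\mathcal{M} u(\cdot,T)$ is now attained at some $(x^\ast, T)$ with $T > 0$, the strong maximum principle forces $u \equiv M$ on $\mathcal{M} \times [0, T]$, and the mean-value constraint $\intbar_\mathcal{M} u(\cdot,0) do_0 = 0$ collapses this to $u \equiv 0$. I do not anticipate a substantial obstacle; the only step requiring care is the very first one, namely recognising that on the closed hypersurface $\mathcal{M}$ the strong principle of Lemma \ref{strong_max_principle} automatically encodes the weak maximum and minimum principles on $\overline{\mathcal{G}}$, after which everything reduces to a clean two-step application of those principles combined with the periodicity condition.
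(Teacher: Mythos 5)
Your argument is correct and follows essentially the same route as the paper: reduce to uniqueness of the homogeneous problem via Proposition \ref{fredholm_alternative}, use the weak maximum and minimum principles together with the periodicity condition to force $\intbar_\mathcal{M} u(\cdot,T)\,do_0 = 0$ and hence $u(\cdot,0) = u(\cdot,T)$, then invoke the strong maximum principle (Lemma \ref{strong_max_principle}) to conclude $u$ is constant, which the mean-zero constraint collapses to $u\equiv 0$. The paper phrases the weak-principle step as $\max_\mathcal{M}\varphi(\cdot,0)\ge\max_\mathcal{M}\varphi(\cdot,T)$ rather than introducing global extrema $M,m$ over $\overline{\mathcal{G}}$, but the reasoning is identical.
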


Existence of solutions for the adjoint operator 
$L^\ast u := \Delta_{g(t)} u (x,t) + {u}_t(x,t) $ 
of the operator $L u := \Delta_{g(t)} u - \tfrac{1}{2} \mbox{tr}_g({g}_t) u - {u}_t$
can now be established in the following way.
First, we define $\underline{g}(t) := g(T-t)$ 
and $\underline{f}(\cdot,t) := f(\cdot, T-t)$
for all $ t \in [0,T]$. Here $f \in \H_{\alpha}(\mathcal{G})$ and $g(t)$
is assumed to be a given family of Riemannian metrics with $g(0) = g(T)$
and with $G(t)$, defined as in 
(\ref{defi_extended_metric}), of class $\H_{1+\alpha}(\mathcal{G})$. 
From \mbox{Corollary \ref{cor_existence}} it follows that there
exists a unique solution $\underline{u} \in \H_{2 + \alpha}(\mathcal{G})$ of
\begin{align*}
	& \Delta_{\underline{g}(t)} \underline{u} - {\underline{u}_t} = \underline{f}
	\quad \textnormal{in} \quad \mathcal{M} \times (0,T),
	\\
	& \intbar_{\mathcal{M}}{\underline{u}(\cdot, 0) do_0} = 0,
	\\
	& \underline{u}(\cdot, 0) 
	= \underline{u}(\cdot, T) - \intbar_\mathcal{M}{\underline{u}(\cdot, T) do_0}
	\quad \textnormal{on} \quad \mathcal{M}.
\end{align*}
Next, we define $u(\cdot,t) := \underline{u}(\cdot, T -t)$. Obviously, we have
${u}_t(\cdot, t) = - {\underline{u}_t}( \cdot, T-t )$.
Furthermore, it follows that
\begin{align}
	&\Delta_{g(t)} u (x,t) + {u}_t(x,t) 
	= \Delta_{\underline{g}(T-t)} \underline{u}(x,T-t) - {\underline{u}_t}(x,T-t)
	= \underline{f}(x,T-t)
	= f(x,t),
	\label{backward_heat_eq}
	\\
	&  \intbar_\mathcal{M} u( \cdot, T) do_0 = 0,
	\nonumber
	\\
	& u(\cdot,T) = u(\cdot,0) - \intbar_\mathcal{M}{u(\cdot,0) do_0}
	\quad \textnormal{on} \quad \mathcal{M}.
	\nonumber
\end{align}

We use this result below to prove uniqueness of periodic solutions for the choice 
$\mathfrak{c}= \tfrac{1}{2} \mbox{tr}_g({g}_t)$ of the zero-order term.
That is for the operator 
$L u := \Delta_{g(t)} u - \tfrac{1}{2} \mbox{tr}_g({g}_t) u - {u}_t$. 
The Fredholm alternative in Proposition \ref{fredholm_alternative}
then gives the main theorem of the paper.

\begin{theorem}
\label{main_result}
Suppose that the hypotheses of Proposition \ref{prop_fix_point_iteration} hold.
Then for all $f \in \H_\alpha(\mathcal{G})$ and $c \in \mathbb{R}$ 
there exists a unique solution 
$u \in \H_{2 + \alpha}(\mathcal{G})$ of
\begin{align*}
	& \Delta_{g(t)} u - \tfrac{1}{2} \mbox{tr}_g({g}_t) u - {u}_t = f 
	\quad \textnormal{in} \quad \mathcal{M} \times (0,T),
	\\
	& \intbar_\mathcal{M} u(\cdot,0) do_0 = c,
	\\
	& u(\cdot,0) = u(\cdot,T) - \left( \intbar_\mathcal{M} u(\cdot,T) do_0 - c \right)	
	\quad \textnormal{on} \quad \mathcal{M}.
\end{align*}
In particular, for all $c \in \mathbb{R}$ and for all $f \in \H_\alpha(\mathcal{G})$ with $\int_0^T \int_{\mathcal{M}}{f(\cdot,t) do(t)} dt = 0$ there exists a unique solution $u \in \H_{2 + \alpha}(\mathcal{G})$ of
\begin{align*}
	& \Delta_{g(t)} u - \tfrac{1}{2} \mbox{tr}_g({g}_t) u - {u}_t = f 
	\quad \textnormal{in} \quad \mathcal{M} \times (0,T),
	\\
	& \intbar_\mathcal{M} u(\cdot,0) do_0 = c,
	\\
	& u(\cdot,0) = u(\cdot,T)
	\quad \textnormal{on} \quad \mathcal{M}.
\end{align*}
\end{theorem}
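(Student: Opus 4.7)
The strategy is to apply the Fredholm alternative (Proposition \ref{fredholm_alternative}) to the operator $Lu := \Delta_{g(t)}u - \tfrac{1}{2}\mbox{tr}_g(g_t)u - u_t$; its zero-order term lies in $\H_\alpha(\mathcal{G})$ because $G \in \H_{1+\alpha}(\mathcal{G})$, so the hypotheses of Proposition \ref{fredholm_alternative} hold. It therefore suffices to show that the homogeneous periodic problem has only the trivial solution; the second, strictly periodic, assertion will then follow from a one-line mass computation for the inhomogeneous equation.

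So let $u \in \H_{2+\alpha}(\mathcal{G})$ satisfy $Lu = 0$ together with $\intbar_\mathcal{M} u(\cdot,0)\,do_0 = 0$ and $u(\cdot,0) = u(\cdot,T) - \intbar_\mathcal{M} u(\cdot,T)\,do_0$. The special feature of the choice $\mathfrak{c} = \tfrac{1}{2}\mbox{tr}_g(g_t)$ is that the transport formula (\ref{transport_formula_metrics}) together with the equation yields
\[
\frac{d}{dt}\int_\mathcal{M} u(\cdot,t)\,do(t) = \int_\mathcal{M}\!\bigl(u_t + \tfrac{1}{2}\mbox{tr}_g(g_t)u\bigr)do(t) = \int_\mathcal{M}\!\Delta_{g(t)}u\,do(t) = 0,
\]
the last equality following from Green's identity on the closed surface $\mathcal{M}$ with test function $1$. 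Hence the total mass is constant in $t$; since $do(T) = do_0$, this forces $\intbar_\mathcal{M} u(\cdot,T)\,do_0 = 0$ as well, and the relaxed periodicity collapses to strict periodicity $u(\cdot,0) = u(\cdot,T)$.

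To extract $u \equiv 0$, I invoke the adjoint construction described just before the theorem: applied with right-hand side $u \in \H_\alpha(\mathcal{G})$, it produces $w \in \H_{2+\alpha}(\mathcal{G})$ solving $\Delta_{g(t)}w + w_t = u$, $\intbar_\mathcal{M} w(\cdot,T)\,do_0 = 0$, and $w(\cdot,T) = w(\cdot,0) - \intbar_\mathcal{M} w(\cdot,0)\,do_0$. Multiplying this equation by $u$, integrating over $\mathcal{M}\times(0,T)$, and using the symmetry of $\Delta_{g(t)}$ together with (\ref{transport_formula_metrics}) to shift $w_t$ onto $u$, one obtains
\[
\int_0^T\!\!\int_\mathcal{M} u^2\,do(t)\,dt = \int_0^T\!\!\int_\mathcal{M} w\,(Lu)\,do(t)\,dt + \Bigl[\int_\mathcal{M} u\,w\,do(t)\Bigr]_0^T.
\]
The bulk term vanishes because $Lu = 0$. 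Substituting $u(\cdot,0) = u(\cdot,T)$ and $w(\cdot,T) = w(\cdot,0) - \intbar_\mathcal{M} w(\cdot,0)\,do_0$ reduces the boundary contribution to $-\bigl(\intbar_\mathcal{M} w(\cdot,0)\,do_0\bigr)\int_\mathcal{M} u(\cdot,0)\,do_0 = 0$ by the mean-value hypothesis on $u(\cdot,0)$. Therefore $u \equiv 0$, Proposition \ref{fredholm_alternative} delivers existence and uniqueness for every $(f,c)$, and for the final assertion the analogous mass computation applied to $Lu = f$ gives $\intbar_\mathcal{M} u(\cdot,T)\,do_0 - c = -\tfrac{1}{|\mathcal{M}|}\int_0^T\!\!\int_\mathcal{M} f\,do(t)\,dt = 0$, so the relaxed condition degenerates to $u(\cdot,0) = u(\cdot,T)$. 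The main subtlety is precisely the cancellation of the two boundary terms in the energy identity: this works only because the specific $\mathfrak{c} = \tfrac{1}{2}\mbox{tr}_g(g_t)$ enforces strict periodicity of the homogeneous $u$ while the backward-time construction delivers exactly the matching dual periodicity for $w$.
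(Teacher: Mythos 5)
Your proof is correct and follows essentially the same route as the paper's: reduce to uniqueness of the homogeneous problem via the Fredholm alternative, show that a homogeneous solution is strictly periodic by a mass computation, construct the dual solution $\varphi$ (your $w$) via the backward-time construction, and extract $u \equiv 0$ from the resulting energy identity; the final assertion then follows from the same mass computation applied to the inhomogeneous equation. The only minor slip is the opening parenthetical claim that $\tfrac{1}{2}\mbox{tr}_g(g_t) \in \H_\alpha(\mathcal{G})$ follows from $G \in \H_{1+\alpha}(\mathcal{G})$ — the $\H_{1+\alpha}$ norm controls the half-order time H\"older seminorm and the spatial gradient but not $G_t$, so this regularity of the zero-order term is really an implicit hypothesis carried along from Proposition \ref{prop_fix_point_iteration} rather than a consequence of it; this does not affect the structure of your argument.
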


Finally, we obtain the following existence and uniqueness result for time-periodic
solutions to advection-diffusion equations on moving hypersurfaces.

\begin{theorem}
\label{existence_on_moving_surfaces}
Let $\Gamma(t) \subset \mathbb{R}^{n+1}$, $t \in [0,T]$ 
be a family of closed, orientable, connected,
$n$-dimensional hypersurfaces of class $C^{3}_1$ with $\Gamma(0)= \Gamma(T)$,
such that there exist a hypersurface $\mathcal{M}$ satisfying the hypotheses of
Proposition \ref{prop_fix_point_iteration} and 
a $C^{3}_1$-embedding $X: \overline{\mathcal{G}} \rightarrow \overline{\mathcal{G}_t}$ in the 
sense of Section $2$.
Then for all $c \in \mathbb{R}$ and for all $f \in \H_\alpha(\mathcal{G}_t)$ with 
$\int_0^T \int_{\Gamma(t)}{f(\cdot,t) d\sigma(t)} dt = 0$ 
there exists a unique solution $u \in \H_{2 + \alpha}(\mathcal{G}_t)$ of
\begin{align*}
	& \Delta_{\Gamma(t)} u - u \nabla_{\Gamma(t)} \cdot v - \partial^\bullet {u} = f 
	\quad \textnormal{in} \quad \mathcal{G}_t,
	\\
	& \intbar_{\Gamma(0)} u(\cdot,0) d\sigma(0) = c,
	\\
	& u(\cdot,0) = u(\cdot,T)
	\quad \textnormal{on} \quad \Gamma(0).
\end{align*}	
\end{theorem}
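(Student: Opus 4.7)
The plan is to pull the problem back to the fixed reference cylinder $\mathcal{G}$ via the embedding $X$ and then directly invoke the second statement of Theorem \ref{main_result}, which is written precisely for the full periodicity condition $u(\cdot,0)=u(\cdot,T)$ under an integral constraint on the source.

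I set $\tilde u := u \circ X$ and $\tilde f := f \circ X$; by the very definition of $\H_{k+\alpha}(\mathcal{G}_t)$, $u \in \H_{2+\alpha}(\mathcal{G}_t)$ and $f \in \H_{\alpha}(\mathcal{G}_t)$ if and only if $\tilde u \in \H_{2+\alpha}(\mathcal{G})$ and $\tilde f \in \H_{\alpha}(\mathcal{G})$. Applying Lemma \ref{advection_diffusion_eq_on_reference_domain} with $\mathfrak{c} = \nabla_{\Gamma(t)} \cdot v$, the equation on $\mathcal{G}_t$ is equivalent to
\[
\Delta_{g(t)} \tilde u - \tilde{\mathfrak{c}}\,\tilde u - \tilde u_t = \tilde f \qquad \text{on } \mathcal{G},
\]
with the mean-value and periodicity conditions transferred through $X$. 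The computation carried out immediately after Lemma \ref{advection_diffusion_eq_on_reference_domain} identifies $\tilde{\mathfrak{c}} = \tfrac{1}{2}\mbox{tr}_{g(t)}(g_t)$, which is exactly the zero-order coefficient appearing in Theorem \ref{main_result}. Since $do(t) = \sqrt{\det G(t)}\,d\sigma$ realises $d\sigma(t)$ as the pull-back of the volume form on $\Gamma(t)$, the integral constraint $\int_0^T \!\int_{\Gamma(t)} f\,d\sigma(t)\,dt = 0$ translates into $\int_0^T \!\int_{\mathcal{M}} \tilde f\,do(t)\,dt = 0$, and the mean-value prescription $\intbar_{\Gamma(0)} u(\cdot,0)\,d\sigma(0) = c$ becomes $\intbar_{\mathcal{M}} \tilde u(\cdot,0)\,do_0 = c$.

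Next I would check that the hypotheses of Proposition \ref{prop_fix_point_iteration} (equivalently, of Theorem \ref{main_result}) hold for the pulled-back data: $\mathcal{M}$ is assumed to satisfy them, $G$ is built from first derivatives of the $C^3_1$-embedding $X$ via (\ref{G_defi}) and hence lies in $\H_{1+\alpha}(\mathcal{G})$, while $\tilde{\mathfrak{c}} = \tfrac{1}{2}\mbox{tr}_{g(t)}(g_t) \in \H_\alpha(\mathcal{G})$ for the same reason. The second form of Theorem \ref{main_result} then delivers a unique $\tilde u \in \H_{2+\alpha}(\mathcal{G})$ solving the transformed problem, including the full periodicity $\tilde u(\cdot,0) = \tilde u(\cdot,T)$.

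Finally I would set $u := \tilde u \circ X^{-1}$, which lies in $\H_{2+\alpha}(\mathcal{G}_t)$ by the definition of that space. The periodicity $u(\cdot,0) = u(\cdot,T)$ on $\Gamma(0) = \Gamma(T)$ follows from $\tilde u(\cdot,0) = \tilde u(\cdot,T)$ together with the time-periodicity $X(\cdot,0) = X(\cdot,T)$ of the embedding, and uniqueness transports back through the same change of variables. There is no real obstacle: the whole proof is a synthesis of Lemma \ref{advection_diffusion_eq_on_reference_domain} with Theorem \ref{main_result}, the only item demanding even minor care being the identification of $\tilde{\mathfrak{c}}$ with $\tfrac{1}{2}\mbox{tr}_{g(t)}(g_t)$, which has already been established in the subsection on reformulation.
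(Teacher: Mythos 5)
Your proof is correct and follows the same route as the paper, whose own argument is simply the one-line observation that the result follows from Lemma \ref{advection_diffusion_eq_on_reference_domain} together with Theorem \ref{main_result}. You have merely filled in the details — the identification $\tilde{\mathfrak{c}} = \tfrac{1}{2}\mbox{tr}_g(g_t)$, the transformation of the integral constraint via $do(t)=\sqrt{\det G(t)}\,d\sigma$, and the transfer of the periodicity condition through $X(\cdot,0)=X(\cdot,T)$ — all of which the paper establishes in the subsection preceding the lemma.
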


\section{Periodic solutions: Proofs}\hfill

\noindent  {\bf Proof of Theorem \ref{existence_theorem}}
Again we use the fact that the advection-diffusion equation on $\mathcal{M}$
can be reformulated as a (non-degenerate) parabolic partial differential equation
on a neighbourhood $\mathcal{N}_\delta$ of $\mathcal{M}$, see the Appendix for more
details. The theorem is a consequence of  the norm equivalence (\ref{norm_equivalence}),
Theorem $5.18$ in \cite{GLbook}. More precisely, the norm equivalence ensures that the lifted data
on $\mathcal{N}_\delta$ is sufficiently smooth and bounded. Hence, there exists a unique
solution $\hat{u}$ of the Neumann boundary problem on $\mathcal{N}_\delta$
satisfying a parabolic Schauder estimate. 
A solution $u$ to the problem $(S_I)$ can be easily
constructed from this solution $\hat{u}$, see the Appendix for details.
Moreover, the solution $u$ to the problem $(S_I)$ is unique,
since it has to satisfy $u^l = \hat{u}$. Finally, the Schauder estimate for $u$ follows
from the norm equivalence (\ref{norm_equivalence}) and the corresponding estimate
for $\hat{u}$.
\hfill $\Box$

\noindent  {\bf Proof of Proposition \ref{prop_fix_point_iteration}}
We divide the proof in two steps. First, we show that there is a unique solution 
$u \in C^0(\mathcal{M} \times [0,T]) \cap C^{2,1}(\mathcal{M} \times (0,T))$ of $(S_P)$
by applying a contraction argument. 
Here, $C^{2,1}$ refers to functions that are continuously differentiable
with respect to time and twice continuously differentiable with
respect to the space coordinates.
Then in a second step, 
we choose a special series that converges against the periodic solution
with respect to the $| \cdot |_{2 + \alpha, \mathcal{G}}$-norm in order to establish
the Schauder estimate (\ref{a_priori_estimate_2}).
 
\textit{Step $i)$} We define $J_0: C^0(\mathcal{M}) \rightarrow C^0(\mathcal{M} \times [0,T]) \cap C^{2,1}(\mathcal{M} \times (0,T))$ 
by $J_0(u_0) := u$, where $u$ is the unique solution to the initial value problem $(S_I)$,
see Theorem \ref{existence_theorem}. 
Then we define $J: C^0(\mathcal{M}) \rightarrow C^0(\mathcal{M})$ by $J(u_0)(x) := J_0(u_0)(x,T) = u(x,T)$
for all $x \in \mathcal{M}$.
In the following we show that $J$ is a contraction with 
contraction constant $\theta_J < \tfrac{1}{2}$.
Let $w_1, w_2 \in C^0(\mathcal{M})$ and $w:= w_1 - w_2$, $v:= J(w_1) - J(w_2)$.
We thus have to show that
$$
	| v |_{0,\mathcal{M}} \leq \theta_J | w |_{0,\mathcal{M}}
$$
for some $0 \leq \theta_J < \tfrac{1}{2}$. 
Henceforward, the linear second order operator in $(S_P)$ is denoted by $L$, that is
$$
	L u := \Delta_{g(t)} u - \mathfrak{c} u - u_t.
$$
Let $\epsilon \in (\frac{\ln 2}{T}, c_0)$, then we obtain
\begin{align*}
	L e^{- \epsilon t} &= \epsilon e^{- \epsilon t} - \mathfrak{c} e^{- \epsilon t}
						= (\epsilon - \mathfrak{c}) e^{- \epsilon t}
						\\
					&\leq (\epsilon - c_0 ) e^{- \epsilon t} < 0.	
\end{align*}
Hence $U^\pm := \pm (J_0(w_1) - J_0(w_2)) - | w |_{0,\mathcal{M}} e^{- \epsilon t}$
satisfies the conditions
\begin{align*}
 LU^{\pm} &= \pm (L J_0(w_1) - LJ_0(w_2)) - | w |_{0,\mathcal{M}} L e^{- \epsilon t}
\\ 
 	&= \pm (f - f) - | w |_{0,\mathcal{M}} Le^{- \epsilon t}
 \\
 	&= - | w |_{0,\mathcal{M}} L e^{- \epsilon t} \geq 0 	
 	\quad \textnormal{in} \quad \mathcal{M} \times (0,T).
\end{align*}
Furthermore, we have
\begin{align}
	U^\pm(\cdot , 0) &= \pm (J_0(w_1) - J_0(w_2))(\cdot, 0) - | w |_{0,\mathcal{M}} 
	\nonumber \\
	&= \pm ( w_1 - w_2 ) - |w|_{0,\mathcal{M}}
	\nonumber \\
	&= \pm w - |w|_{0,\mathcal{M}} \leq 0 
	\quad \textnormal{in} \quad \mathcal{M}.
	\label{boundary_values}
\end{align}
Now we suppose that $M^{\pm} := \max_{(x,t) \in \mathcal{M} \times [0,T]} {U^\pm(x,t)} > 0$.
Then $M^\pm$ must be attained at a point $(x^\ast, t^\ast) \in \mathcal{M} \times (0,T]$.
It follows from the maximum principle, see Lemma \ref{strong_max_principle}, 
that $U^\pm(x^\ast,0) = M^\pm > 0$,
which contradicts (\ref{boundary_values}).
Hence $M^\pm \leq 0$, which means that
$U^\pm \leq 0$ in $\mathcal{M} \times [0,T]$.
It follows that
$\pm v \leq | w|_{0,\mathcal{M}} e^{- \epsilon T}$ on $\mathcal{M}$, and 
$$
 |v |_{0,\mathcal{M}} \leq e^{- \epsilon T} |w|_{0,\mathcal{M}}.
$$
This shows that $J$ is a contraction with constant $\theta_J := e^{ - \epsilon T}
< e^{ - \frac{\ln 2}{T} T} = \frac{1}{2}$.
Now we define 
$$\mathcal{B} := \left\{ u \in C^0(\mathcal{M}): \intbar_{\mathcal{M}}{u do_0} = 0 \right\}, $$
and the operator $K: \mathcal{B} \rightarrow C^0(\mathcal{M})$ by
$$
	K(u_0) = J(u_0) - \intbar_\mathcal{M}{J(u_0) do_0}. 
$$
In fact we have 
\begin{align*}
	\intbar_{\mathcal{M}}{K(u_0) do_0} 
	= \left( 1 - \intbar_{\mathcal{M}}{1 do_0} \right) \intbar_{\mathcal{M}}{J(u_0) do_0}
	= 0. 
\end{align*}
Hence $K: \mathcal{B} \rightarrow \mathcal{B}$. 
Obviously, $(\mathcal{B}, | \cdot|_{0,\mathcal{M}})$ is a (non-empty) Banach space.
In the following we show that $K$ is a contraction. Let $w_1, w_2 \in \mathcal{B}$.
We thus have to show that
$$
	| K(w_1) - K(w_2)|_{0,\mathcal{M}}
	\leq \theta_K | w_1 - w_2|_{0,\mathcal{M}}
$$
for some $0 \leq \theta_K < 1$. Using the fact that $J$ is a contraction
we obtain
\begin{align*}
 	| K(w_1) - K(w_2) |_{0,\mathcal{M}} 
 	&= | J(w_1) - \intbar_\mathcal{M}{J(w_1) do_0 }
 		- \left( J(w_2) - \intbar_\mathcal{M}{J(w_2) do_0} \right) |_{0,\mathcal{M}}
 	\\
 	&\leq | J(w_1) - J(w_2) |_{0,\mathcal{M}}	
 + | \intbar_\mathcal{M}{J(w_1) - J(w_2) do_0} |_{0,\mathcal{M}}
 	\\
 	&\leq \left( 1 + \intbar_{\mathcal{M}}{1 do_0} \right)
 	| J(w_1) - J(w_2) |_{0,\mathcal{M}}	
 	\\
 &\leq 2 \theta_J | w_1 - w_2 |_{0,\mathcal{M}}.		
\end{align*}
We set $\theta_K := 2 \theta_J < 1$. Since $K$ is a contraction with constant
$\theta_K$, it follows that there is unique function $u_0^\ast \in \mathcal{B}$
with $K(u_0^\ast) = u_0^\ast$, that is
$$
	J(u_0^\ast) - \intbar_{\mathcal{M}}{J(u_0^\ast) do_0} = u_0^\ast.
$$
Now let $u^\ast := J_0(u_0^\ast)$. We then have
\begin{align*}
	& Lu^\ast = f \quad \textnormal{in} \quad \mathcal{M} \times (0,T),
\\	
	& \intbar_{\mathcal{M}}{u^\ast(\cdot,0) do_0} = 0,
\\
	& u^\ast(\cdot,0) = u_0^\ast(\cdot) = J(u_0^\ast) - \intbar_{\mathcal{M}}{J(u^\ast_0) do_0}
	  = u^\ast(\cdot,T) - \intbar_{\mathcal{M}}{u^\ast(\cdot, T) do_0}.
\end{align*} 
Now suppose  
$\hat{u} \in C^0(\mathcal{M} \times [0,T]) \cap C^{2,1}(\mathcal{M} \times (0,T))$
is a solution of $(S_P)$.
Then we have $\hat{u}_0 := \hat{u}(\cdot, 0) \in \mathcal{B}$ and
$ \hat{u} = J_0(\hat{u}_0)$ as well as $\hat{u}(\cdot, T) = J(\hat{u}_0)$.
Moreover, it follows that 
\begin{align*}
	K(\hat{u}_0) &= J(\hat{u}_0) - \intbar_{\mathcal{M}}{J(\hat{u}_0) do_0}
	= \hat{u}(\cdot, T) - \intbar_{\mathcal{M}}{ \hat{u}(\cdot,T) do_0}
	= \hat{u}_0.
\end{align*}
Therefore, we have $\hat{u} = u^\ast$, which completes the first step of the proof.

\textit{Step $ii)$}
We now define $w_0 :=0$ and $w_{k+1} := K(w_k)$ for $k \in \mathbb{N}_0$.
By induction it follows from Theorem \ref{existence_theorem}
that $J_0(w_k) \in \H_{2+ \alpha}(\mathcal{G})$. Moreover, we have
\begin{align*}
	& | K(w_{k+m}) - K(w_k) |_{2 + \alpha,\mathcal{M}} 
	\\
	& \quad \leq | J(w_{k+m}) - J(w_k)|_{2+\alpha,\mathcal{M}} + | \intbar_\mathcal{M}{J(w_{k+m}) - J(w_k)do_0}|_{0,\mathcal{M}}
	\\
	& \quad \leq | J(w_{k+m}) - J(w_k)|_{2+\alpha,\mathcal{M}} + | J(w_{k+m}) - J(w_k)|_{0,\mathcal{M}}
	\\
	& \quad \leq 2 | J(w_{k+m}) - J(w_k)|_{2+\alpha,\mathcal{M}}
	\\
	& \quad \leq 2 | J_0(w_{k+m}) - J_0(w_k) |_{2 + \alpha,\mathcal{G}}. 
\end{align*}  
In the following we show that 
$| J_0(w_{k+m}) - J_0(w_k) |_{2 + \alpha,\mathcal{G}} \rightarrow 0$
for $k \rightarrow \infty$ and hence,
$ u_0^\ast = \lim_{k \rightarrow \infty} w_k \in \H_{2+\alpha}(\mathcal{M})$
as well as 
$u^\ast = J_0(u_0^\ast) \in \H_{2 + \alpha}(\mathcal{G})$ 
according to Theorem \ref{existence_theorem}. 
Furthermore, we then have $u^\ast = \lim_{k \rightarrow \infty} J_0(w_k)$.
We now choose $\zeta \in C^\infty([0,T])$ with $\zeta = 0$
on $[0, \tfrac{1}{3}T]$, $\zeta = 1$ on $[\tfrac{2}{3}T, T]$ and $\zeta' \geq 0$.
The function $\zeta(J_0(w_{k+1}) - J_0(w_k))$ then satisfies
\begin{align*}
&	L ( \zeta (J_0(w_{k+1}) - J_0(w_k))) = - \zeta' (J_0(w_{k+1}) - J_0(w_k)) 
	\quad \textnormal{in} \quad \mathcal{M} \times (0,T),
	\\
&   (\zeta (J_0(w_{k+1}) - J_0(w_k)))(\cdot,0) = 0 
	\quad \textnormal{on} \quad \mathcal{M}	
\end{align*}
From the Schauder estimate (\ref{a_priori_estimate_1}) it follows that
\begin{align*}
	|\zeta (J_0(w_{k+1}) - J_0(w_k)) |_{2+\alpha, \mathcal{G}} 
			&\leq C |\zeta'(J_0(w_{k+1}) - J_0(w_k))|_{\alpha, \mathcal{G}}
	\\
			&\leq C(\zeta) |J_0(w_{k+1}) - J_0(w_k)|_{\alpha, \mathcal{G}},
\end{align*}
and hence,
\begin{align}
	|J(w_{k+1}) - J(w_k)|_{2+\alpha, \mathcal{M}} 
	&\leq |\zeta (J_0(w_{k+1}) - J_0(w_k)) |_{2+\alpha, \mathcal{G}}	
	\nonumber
	\\
	&\leq C(\zeta) |J_0(w_{k+1}) - J_0(w_k)|_{\alpha,\mathcal{G}}.
	\label{hoelder_estimate_1}
\end{align}
Since $L(J_0(w_{k+2}) - J_0(w_{k+1}))= 0$, 
the estimate (\ref{a_priori_estimate_1}) also gives
\begin{align*}
| J_0(w_{k+2}) - J_0(w_{k+1}) |_{2 + \alpha, \mathcal{G}} &\leq C | w_{k+2} - w_{k+1} |_{2+\alpha, \mathcal{M}}
\leq C |K(w_{k+1}) - K(w_{k}) |_{2 + \alpha, \mathcal{M}}
\\
&\leq C |  J(w_{k+1}) - J(w_k) |_{2+\alpha, \mathcal{M}}
\\
&\leq C(\zeta) |J_0(w_{k+1}) - J_0(w_k)|_{\alpha,\mathcal{G}},
\end{align*}
where we have used (\ref{hoelder_estimate_1}) in the last step.
The interpolation estimate (\ref{interpolation_estimate}) then yields
\begin{align*}
  &| J_0(w_{k+2}) - J_0(w_{k+1}) |_{2+ \alpha, \mathcal{G}} 
\\  
  & \qquad \leq
  C(\zeta) \epsilon^{1 - \alpha} | J_0(w_{k+1}) - J_0(w_k) |_{2+ \alpha, \mathcal{G}}
  + C(\zeta,\epsilon) | J_0(w_{k+1}) - J_0(w_k) |_{0, \mathcal{G}}
\end{align*}
Moreover, the maximum principle gives
$$
	|J_0(w_{k+1}) - J_0(w_k)|_{0, \mathcal{G}} \leq | w_{k+1} - w_k |_{0,\mathcal{M}},
$$
and hence,
\begin{align*}
  & | J_0(w_{k+2}) - J_0(w_{k+1}) |_{2+ \alpha, \mathcal{G}} 
  \\
  & \quad \leq 
   C(\zeta) \epsilon^{1 - \alpha} | J_0(w_{k+1}) - J_0(w_k) |_{2+ \alpha, \mathcal{G}}
  + C(\zeta,\epsilon) | w_{k+1} - w_k |_{0, \mathcal{M}}.	
\end{align*}
Choosing $\epsilon >0 $ such that $C(\zeta) \epsilon^{1- \alpha} = \theta_K$ 
and setting $C^\ast := C(\zeta,\epsilon)$
for this choice of $\epsilon$ leads to
\begin{align*}
	| J_0(w_{k+2}) - J_0(w_{k+1}) |_{2+\alpha, \mathcal{G}} 
	\leq \theta_K | J_0(w_{k+1}) - J_0(w_k) |_{2 + \alpha, \mathcal{G}}
	+ C^\ast | w_{k+1} - w_k |_{0, \mathcal{M}}.
\end{align*}
Since $K$ is a contraction, we obtain
$|w_{k+j+1} - w_{k+j}|_{0,\mathcal{M}} \leq \theta_K^{k+j} |w_1 - w_0|_{0,\mathcal{M}}$
and
\begin{align*}
	&| J_0(w_{k+m+1}) - J_0(w_{k+1}) |_{2+\alpha, \mathcal{G}}  \leq
	\sum_{j=0}^{m-1} | J_0(w_{k+j+2}) - J_0(w_{k+j+1}) |_{2 + \alpha, \mathcal{G}}
	\\
	& \quad \leq
	\sum_{j=0}^{m-1} \left\{ \theta_K |J_0(w_{k+j+1}) - J_0(w_{k+j})|_{2+\alpha, \mathcal{G}} 
		+ C^\ast \theta^{k+j}_K | w_{1} - w_{0} |_{0,\mathcal{M}} \right\}
	\\	
	& \quad \leq
	\sum_{j=0}^{m-1} \left\{ \theta^{k+j+1}_K |J_0(w_1) - J_0(w_0)|_{2+\alpha, \mathcal{G}} 
		+ C^\ast (k+j+1) \theta^{k+j}_K | w_1 - w_0|_{0,\mathcal{M}} \right\}
	\\
	& \quad \leq
	 |J_0(w_1) - J_0(w_0)|_{2+\alpha, \mathcal{G}} \frac{\theta^{k+1}_K - \theta^{k+m+1}_K}{1 - \theta_K} 
	+ C^\ast | w_1 - w_0|_{0,\mathcal{M}}	 
	\frac{\partial }{\partial \theta_K}\frac{\theta^{k+1}_K - \theta^{k+m+1}_K}{1 - \theta_K}
	\\
	& \quad \leq
	|J_0(w_1) - J_0(w_0)|_{2+\alpha, \mathcal{G}} \frac{\theta^{k+1}_K}{1 - \theta_K}
	+ C^\ast | w_1 - w_0|_{0,\mathcal{M}}	 
	\frac{(k+1)\theta_K^{k}}{(1 - \theta_K)^2},
\end{align*}
which converges to $0$ for $k \rightarrow \infty$. Therefore, the periodic solution
$u^\ast = J_0(u^\ast_0)$ is in $\H_{2+\alpha}(\mathcal{G})$ and
$$
	| J_0(u^\ast_0) - J_0(w_{1}) |_{2+\alpha, \mathcal{G}}  \leq
	|J_0(w_1) - J_0(w_0)|_{2+\alpha, \mathcal{G}} \frac{\theta_K}{1 - \theta_K}
	+  | w_1 - w_0|_{0,\mathcal{M}}	 
	\frac{C^\ast}{(1 - \theta_K)^2}.
$$ 
Since $w_0 =0$ and $|w_1|_{2+\alpha,\mathcal{M}} \leq C |J_0(w_0)|_{2+\alpha,\mathcal{G}}
\leq C |f|_{\alpha,\mathcal{G}}$, as well as
\begin{align*}
	& |J_0(w_1)|_{2+\alpha, \mathcal{G}} \leq C ( |f|_{\alpha,\mathcal{G}} + |w_1|_{2+\alpha,\mathcal{M}})
	\leq C |f|_{\alpha,\mathcal{G}},
	\\
	& |J_0(w_1) - J_0(w_0)|_{2+\alpha,\mathcal{G}} \leq C |w_1 |_{2+\alpha,\mathcal{M}}
	\leq C |f|_{\alpha,\mathcal{G}},
\end{align*}
which hold because of (\ref{a_priori_estimate_1}), we finally obtain the estimate
$$
	|u^\ast|_{2 + \alpha, \mathcal{G}} \leq C |f|_{\alpha,\mathcal{G}}.
$$
\hfill $\Box$

\noindent {\bf Proof of Proposition \ref{fredholm_alternative}}
Since $u$ solves the non-homogeneous problem if and only if $\tilde{u} := u -c$ solves the
problem 
\begin{align*}
	& \Delta_{g(t)} \tilde{u} - \mathfrak{c} \tilde{u} - \tilde{u}_t = f + c \mathfrak{c}, 
	\\
	& \intbar_{\mathcal{M}}{ \tilde{u}(\cdot, 0) do_0} = 0,
	\\
	&  \tilde{u}(\cdot, 0) = \tilde{u}(\cdot,T) - \intbar_\mathcal{M}{\tilde{u}(\cdot,T) do_0},
\end{align*}
we can assume without loss of generality that $c=0$. Now, let 
$\hat{\mathcal{B}} := \{ u \in \H_{2+\alpha}(\mathcal{G}): \intbr_\mathcal{M} {u(\cdot, 0) do_0} = 0 \ \textnormal{and} \ 
u(\cdot,0) = u(\cdot, T) - \intbr_{\mathcal{M}}{ u( \cdot, T) do_0} 
\ \textnormal{on} \ \mathcal{M} \}$ and let $L_T: \hat{\mathcal{B}} \rightarrow \H_\alpha(\mathcal{G})$
be the linear second order operator defined by 
$$
	L_T u = \Delta_{g(t)} u - \frac{1}{T} u - {u_t}.
$$
According to Proposition \ref{prop_fix_point_iteration}
the operator $L_T$ is invertible and the inverse operator $L_T^{-1}$ is continuous.
Hence, we can define the operator 
$\mathcal{K}: \H_\alpha(\mathcal{G}) \rightarrow \hat{\mathcal{B}} \subset \H_{\alpha}(\mathcal{G})$ by
$\mathcal{K} u := L_T^{-1} \left( \frac{1}{T} u - \mathfrak{c} u \right)$.
Because of the Schauder estimate (\ref{a_priori_estimate_2})
and the fact that $\hat{\mathcal{B}}$ is compactly embedded in $\H_{\alpha}(\mathcal{G})$
this is a compact operator.
The equation $\Delta_{g(t)} u - \mathfrak{c} u - u_t = f$ is equivalent to 
\begin{align*}
 & L_T u + \left( \frac{1}{T} - \mathfrak{c} \right) u = f
 \\
 \Leftrightarrow \quad & u + \mathcal{K} u = L_T^{-1} f.
\end{align*}
Since $\unit + \mathcal{K}$ is a Fredholm operator, the second equation has a solution
if and only if $(\unit + \mathcal{K}) u = 0$ implies $u = 0$. 
The standard Fredholm theory therefore gives the result. 
\hfill $\Box$

\noindent {\bf Proof of Corollary \ref{cor_existence}}
Because of the Fredholm alternative in Proposition \ref{fredholm_alternative}
we only have to establish uniqueness of the homogeneous problem.
Suppose we have a solution $\varphi \in \H_{2 + \alpha}(\mathcal{G})$ of the homogeneous problem.
Because of the maximum principle, see Lemma \ref{strong_max_principle}, we then have
\begin{align*}
	\max_{\mathcal{M}} \varphi(\cdot, 0)
	& \geq \max_{\mathcal{M}} \varphi(\cdot,T)
	= \max_\mathcal{M} \left( \varphi(\cdot,0) + \intbar_\mathcal{M} {\varphi(\cdot,T) do_0} \right)
	\\
	& = \max_\mathcal{M} \varphi(\cdot,0) + \intbar_\mathcal{M} {\varphi(\cdot,T) do_0},
\end{align*}
and hence, $\intbr_\mathcal{M} {\varphi(\cdot,T) do_0} \leq 0$. 
In the same way we obtain
$$\max_\mathcal{M} \left( - \varphi(\cdot, 0) \right) \geq 
\max_\mathcal{M} \left( - \varphi(\cdot, 0) \right) - \intbar_{\mathcal{M}}{ \varphi(\cdot,T) do_0}$$
and $\intbr_\mathcal{M} {\varphi(\cdot,T) do_0} \geq 0$.
It follows that $ \intbr_\mathcal{M} {\varphi(\cdot,T) do_0} = 0$ and hence,
$ \varphi(\cdot, 0) = \varphi(\cdot,T)$ on $\mathcal{M}$.
Moreover, we know from the strong maximum principle that either $\varphi$ is constant
or $\max_{\mathcal{M}} {\varphi(\cdot,T)} < \max_{\mathcal{M}} {\varphi(\cdot, 0)}$.
Therefore, $\varphi$ has to be constant. From $\intbr_\mathcal{M} \varphi(\cdot,0) do_0 = 0$
it then follows that $\varphi = 0$.
\hfill $\Box$

\noindent {\bf Proof of Theorem \ref{main_result}}
The second statement easily follows from the first statement and the fact that
\begin{align*}
	\int_{\mathcal{M}}{u(\cdot,T) do_0} &= \int_\mathcal{M} u(\cdot,T) do(T)
	= \int_{\mathcal{M}}{u(\cdot, 0) do(0)} + \int_0^T \frac{d}{dt} \int_\mathcal{M} {u(\cdot,t) do(t)}dt
	\\
	&= c |\mathcal{M} | + \int_0^T \frac{d}{dt} \int_\mathcal{M} {u(\cdot,t) do(t)}dt
	\\
	&= c |\mathcal{M} | + \int_0^T \int_\mathcal{M} {{u}_t + \tfrac{1}{2} \mbox{tr}_g({g}_t) u  do(t)}dt
	\\
	&= c |\mathcal{M} | + \int_0^T \int_\mathcal{M} { \Delta_{g(t)} u  do(t)} 
		- \int_0^T \int_\mathcal{M} { f  do(t)} = c |\mathcal{M} |,
\end{align*}
that is $\intbr_\mathcal{M} u(\cdot,T) do_0 = c$ and hence, $u(\cdot,0) = u(\cdot,T)$.
In order to prove the first statement, we mention 
that according to Proposition \ref{fredholm_alternative}, 
it suffices to prove the uniqueness result for the homogeneous equation, 
that is $f=0$ and $c=0$. 
Let $u \in  \H_{2 + \alpha}(\mathcal{G})$ be a solution
of the homogeneous problem. As above, we obtain $u(\cdot, 0) = u(\cdot,T)$
on $\mathcal{M}$. Next, we choose $\varphi \in \H_{2 + \alpha}(\mathcal{G})$
such that 
\begin{align*}
	&\Delta_{g(t)}\varphi + {\varphi}_t = u 
	\quad \textnormal{in} \quad \mathcal{M} \times (0,T),
	\\
	&  \intbar_\mathcal{M} \varphi( \cdot, T) do_0 = 0,
	\\
	& \varphi(\cdot,T) = \varphi(\cdot,0) - \intbar_\mathcal{M}{\varphi(\cdot,0) do_0}
	\quad \textnormal{on} \quad \mathcal{M}.
\end{align*}
According to Corollary \ref{cor_existence} such a solution exists if we choose $f = u$
in (\ref{backward_heat_eq}).
We then obtain
\begin{align*}
0 &= \int_0^T \int_\mathcal{M} \left( \Delta_{g(t)} u - \tfrac{1}{2} \mbox{tr}_g(g_t) u - u_t \right) \varphi do(t) dt
\\
	&= \int_0^T \int_\mathcal{M} u ( \Delta_{g(t)} \varphi + \varphi_t ) do(t) dt 
	- \int_0^T \int_\mathcal{M}  \tfrac{1}{2} \mbox{tr}_g(g_t) u \varphi 
		+ (u\varphi)_t do(t)	dt	
\\
	&= \int_0^T \int_\mathcal{M} u ( \Delta_{g(t)} \varphi + \varphi_t ) do(t) dt 
	- \int_0^T  \frac{d}{dt} \int_\mathcal{M} u \varphi do(t)	dt	
\\  &= \int_0^T \int_\mathcal{M} |u|^2 do(t) dt 
	- \int_\mathcal{M} u(\cdot,T) \varphi(\cdot,T) do(T) 
	+ \int_\mathcal{M} u(\cdot,0) \varphi(\cdot,0) do(0) 	
\\  &=  \int_0^T \int_\mathcal{M} |u|^2 do(t) dt 
	- \int_\mathcal{M} u(\cdot,0) \varphi(\cdot,T) do(0) 
	+ \int_\mathcal{M} u(\cdot,0) \varphi(\cdot,0) do(0) 	
\\
	&=  \int_0^T \int_\mathcal{M} |u|^2 do(t) dt 
	- \int_\mathcal{M} u(\cdot,0) ( \varphi(\cdot,T) - \varphi(\cdot,0) ) do(0)
\\ 	
	&=	\int_0^T \int_\mathcal{M} |u|^2 do(t) dt 
		+ \int_\mathcal{M} { u(\cdot,0) do_0}  \intbar_\mathcal{M} \varphi(\cdot,0) do_0
	=	\int_0^T \int_\mathcal{M} |u|^2 do(t) dt 			
\end{align*}
Hence, we have $u = 0$. This completes the proof of the claim.
\hfill $\Box$

\noindent {\bf Proof of Theorem \ref{existence_on_moving_surfaces}}
The Theorem directly follows from Lemma \ref{advection_diffusion_eq_on_reference_domain}
and Theorem \ref{main_result}.
\hfill $\Box$

\section{Appendix}
In this section, we show that solving a parabolic partial differential 
equation (PDE) on the hypersurface $\mathcal{M} \subset \mathbb{R}^{n+1}$
is, in a certain sense, equivalent to solving a related parabolic
PDE on an open 
neighbourhood of $\mathcal{M} \subset \mathbb{R}^{n+1}$. 
The main advantage of this approach is that
the well-established machinery of parabolic PDEs on 
$(n+1)$-dimensional domains of 
$\mathbb{R}^{n+1}$ can be immediately applied after a suitable PDE on 
the open neighbourhood has been found.
The main task therefore remains to formulate a PDE
on an open neighbourhood of $\mathcal{M}$ from which the solution of the PDE on $\mathcal{M}$
can be extracted.

To motivate the idea, we first have a look at the
following second-order parabolic PDE on the $n$-dimensional hyperplane 
$\mathcal{M}_0:=\{ x \in \mathbb{R}^{n+1}: x_{n+1} = 0 \}$
\[
(P) =
\left\{
\begin{aligned}
& \Delta_{\mathcal{M}_0} u - u_t = f \quad \textnormal{in} \quad \mathcal{M}_0 \times (0,T),
	\\
&	u(\cdot,0) = u_0(\cdot) \quad \textnormal{on} \quad \mathcal{M}_0,
\end{aligned}
\right.\]
where $\Delta_{\mathcal{M}_0}u = \sum_{i=1}^n{u_{x_i x_i}}$
denotes the standard Laplacian on $\mathcal{M}_0$. 
Now, let $u \in  C^0(\mathcal{M}_0\times[0,T]) \cap C^{2,1}(\mathcal{M}_0\times(0,T))$
be a solution to this problem. 
Furthermore, let $\mathcal{N}_\delta := \{ x \in \mathbb{R}^{n+1}: |x_{n+1}| < \delta \}$ be an open neighbourhood of width $\delta$ around $\mathcal{M}_0$. The function
$u^l: \overline{\mathcal{N}}_\delta \times [0,T] \rightarrow \mathbb{R}$ 
defined by $u^l(x,t):= u(\underline{x},t)$ with $x = (\underline{x}, x_{n+1})$ is then a solution to the following (strongly) parabolic initial value boundary problem
\[ 
(P^l) = \left\{
\begin{aligned}
& \Delta \hat{u} - \hat{u}_t = f^l \quad \textnormal{in} \quad \mathcal{N}_\delta \times (0,T),
	\\ 
&	
 \hat{u}_{x_{n+1}} = 0  \quad \textnormal{on} \quad \partial \mathcal{N}_\delta \times (0,T), 
	\\	
& \hat{u}(\cdot, 0) = u^l_0(\cdot) \quad \textnormal{on} \quad \mathcal{N}_\delta.
\end{aligned}
\right. \]  
Here, $f^l: \mathcal{N}_\delta \times (0,T) \rightarrow \mathbb{R}$
and $u^l_0: \mathcal{N}_\delta \rightarrow \mathbb{R}$
are defined by $f^l(x,t):= f(\underline{x},t)$ and $u^l_0(x) := u_0(\underline{x})$,
respectively.
$\Delta \hat{u} = \sum_{i=0}^{n+1}{\hat{u}_{x_i x_i}}$
denotes the standard Laplacian on $\mathbb{R}^{n+1}$.
Obviously, we have 
$\Delta u^l(x,t) = \Delta_{\mathcal{M}_{0}}{u}(\underline{x},t)
+ u^l_{x_{n+1} x_{n+1}}(x,t) = \Delta_{\mathcal{M}_{0}}{u}(\underline{x},t)$
since $u^l_{x_{n+1}} = 0$ on $\mathcal{N}_\delta \times (0,T)$.
Conversely, let $\hat{u} \in C^0(\overline{\mathcal{N}}_\delta \times [0,T])
\cap C^{2,1}(\overline{\mathcal{N}_\delta} \times (0,T))$ be a solution of the lifted PDE $(P^l)$.
Then the function $u: \mathcal{M}_0 \times [0,T] \rightarrow \mathbb{R}$
defined by $u(\underline{x},t):= \tfrac{1}{2\delta}\int_{-\delta}^\delta{
\hat{u}((\underline{x},s),t)ds}$ for all $(\underline{x},t) \in \mathcal{M}_0 \times [0,T]$
is a solution to $(P)$. This easily follows from 
$f(\underline{x},t) = \tfrac{1}{2\delta} \int_{- \delta}^\delta{f^l((\underline{x},s),t)ds}$, $u_0(\underline{x}) = \tfrac{1}{2\delta} \int_{- \delta}^\delta{u^l_0(\underline{x},s)ds}$ and the fact that 
\begin{align*}
 &\tfrac{1}{2\delta} \int_{-\delta}^{\delta}{ \Delta \hat{u} ((\underline{x},s),t)
  - \hat{u}_t((\underline{x},s),t) ds} 
 \\
 & \quad = \tfrac{1}{2\delta} \sum_{i=1}^{n+1} \int_{-\delta}^{\delta}{ \hat{u}_{x_i x_i} ((\underline{x},s),t) }ds 
  - \tfrac{1}{2\delta} \int_{-\delta}^{\delta}{ \hat{u}_t((\underline{x},s),t) ds}
  \\
 & \quad = \tfrac{1}{2\delta} \sum_{i=1}^{n} 
  	\frac{\partial^2}{\partial x_i^2}\int_{-\delta}^{\delta}{ \hat{u}_{x_i x_i} ((\underline{x},s),t) }ds 
 + \tfrac{1}{2\delta} \left( \hat{u}_{x_{n+1}}((\underline{x}, \delta),t)
  - \hat{u}_{x_{n+1}}((\underline{x}, -\delta),t) \right) 
\\ 
  & \quad \qquad - \tfrac{1}{2\delta} 
  \frac{\partial}{\partial t} \int_{-\delta}^{\delta}{ \hat{u}((\underline{x},s),t) ds}
  \\
 & \quad = \sum_{i=1}^{n} u_{x_i x_i}(\underline{x},t) - u_t(\underline{x},t)
\end{align*}
for all $(\underline{x},t) \in \mathcal{M}_0 \times (0,T)$.

Now we want to use this idea for the following second-order parabolic PDE on
the closed hypersurface $\mathcal{M} \subset \mathbb{R}^{n+1}$
\[
(S) =
\left\{
\begin{aligned}
&	Lu = f 
	\quad \textnormal{in} \quad \mathcal{M} \times (0,T),
	\\
&	u(\cdot, 0) = u_0(\cdot) \quad \textnormal{on} \quad \mathcal{M},
\end{aligned}
\right. \]
where
\[
	Lu := \Delta_{g(t)} u + w \cdot \nabla_\mathcal{M} u - \mathfrak{c} u - u_t.
\]
Here $\Delta_{g(t)} u = \D \alpha \left( G^{\alpha \beta} \D \beta u \right)
	+ \frac{1}{2} P_{\alpha \gamma} G^{\gamma \eta} G^{\beta \rho}
	  \D \beta G_{\alpha \eta} \D \rho u$ 
denotes the Laplacian with respect to
a (sufficiently) smooth time-dependent Riemannian metric $g(t)$ on $\mathcal{M}$,
where the map $G(t)$ is the Cartesian representation of $g(t)$ 
as defined in (\ref{defi_extended_metric}).
$w$ is a given vector field on $\mathcal{M}$. 
The corresponding
lifted second-order parabolic PDE is then given by
\[
(S^l) = 
\left\{
\begin{aligned}
&	\hat{L} \hat{u} = f^l \quad \textnormal{in} \quad \mathcal{N}_\delta \times (0,T),
\\
&   \frac{\partial \hat{u}}{\partial \nu} = 0 \quad \textnormal{on} \quad \partial \mathcal{N}_\delta \times (0,T),
\\
& \hat{u}(\cdot,0) = u^l_0 (\cdot) \quad \textnormal{on} \quad \mathcal{N}_\delta,	
\end{aligned}
\right.
\]
where $\mathcal{N}_\delta$ is the open strip of width $\delta$ around
$\mathcal{M}$ defined in (\ref{N_delta}),
and $f^l(x,t) := f(a(x),t)$ as well as $u^l_0(x,t) := u(a(x),t)$
are the lifted data. 
In order to define an appropriate parabolic operator 
$\hat{L}$ on $\mathcal{N}_\delta$, we first introduce the parallel hypersurfaces 
$\mathcal{M}_s := \{ a + s \nu(a): a \in \mathcal{M} \}$ for $|s| < \delta$
and the bijective projections $a_s: \mathcal{M}_s \rightarrow \mathcal{M}$ defined
by $a_s := a_{| \mathcal{M}_s}$. Obviously, we have
$\mathcal{N}_\delta = \bigcup_{|s| < \delta} \mathcal{M}_s$. 
On $\mathcal{M}_s$ we introduce the rescaled tangential gradient
for differentiable functions $\hat{u}$ by
$$
	\tilde{\nabla}_{\mathcal{M}_s} \hat{u}(x_s)
	:= ( \unit - d(x_s) \mathcal{H}(x_s))^{-1} \nabla_{\mathcal{M}_s} \hat{u}(x_s),
	\quad \forall x_s \in \mathcal{M}_s, \forall s \in (- \delta, \delta).
$$
For $\delta >0 $ sufficiently small the map $(\unit - d\mathcal{H})(x_s)$
is indeed invertible, see \cite{DeD} for further details.
The Cartesian components of the rescaled tangential gradient 
$\tilde{\nabla}_{\mathcal{M}_s} \hat{u}$ are denoted by
$$
	\left(
	\begin{matrix}
		\d 1 \hat{u} \\
		\vdots \\
		\d {n+1} \hat{u}	
	\end{matrix}		
	 \right) := \tilde{\nabla}_{\mathcal{M}_s} \hat{u}.
$$
We then define the parabolic operator $\hat{L}$ by
\begin{align}
 \label{Laplacian}
 \hat{L} \hat{u} =& 
 	\d \alpha \left( G^{l\alpha \beta} \d \beta \hat{u} \right)
	+ \frac{1}{2} P_{\alpha\gamma} G^{l\gamma \eta} G^{l\beta \rho} 
		\d \beta G^l_{\alpha \eta} \d \rho \hat{u}
	+ \frac{\partial^2 \hat{u}}{\partial \nu^2}
	 + w^l \cdot \tilde{\nabla}_{\mathcal{M}_s} \hat{u}
	- \mathfrak{c}^l \hat{u} - \hat{u}_t,
\end{align}
where $G^{l \alpha\beta}$ denotes the components
of $(G^{l})^{-1}$ and $G^l_{\alpha \beta}$ denotes the components of 
$G^l(x,t):= G(a(x),t)$.
Furthermore, $w^l(x,t) := w(a(x),t)$ and $\mathfrak{c}^l(x,t) := \mathfrak{c}(a(x),t)$. 
Below we show that for $\hat{u} = u^l(x,s):= u(a(x),t)$ we have
\begin{align}
\label{hat_L_identity}
	\hat{L} u^l(x,t) = Lu(a(x),t), \quad \forall x \in \mathcal{N}_\delta.
\end{align}
Moreover, we have $\tfrac{\partial u^l}{\partial \nu} = 0$,
since $u^l(a + s \nu(a)) = u(a)$ for all $ a \in \mathcal{M}$, $|s| < \delta$.
Using these facts, it is easy to show that $u^l$ is a solution 
to $(S^l)$ if $u$ is a solution to $(S)$. 

Now suppose that
$\hat{u}: \overline{\mathcal{N}}_\delta \times [0,T] \rightarrow \mathbb{R}^{n+1}$ is a solution to $(S^l)$. In this case we define
$u: \mathcal{M} \times [0,T] \rightarrow \mathbb{R}$ by
$$
	u(a,t) := \frac{1}{2\delta} \int_{- \delta}^{\delta}{\hat{u}(a + s \nu(a),t) ds}, \quad \forall (a,t) \in \mathcal{M} \times [0,T].
$$
In order to see that $u$ satisfies $(S)$, we introduce the family of functions
$\tilde{u}_s: \mathcal{M} \times [0,T]  \rightarrow \mathbb{R}$
for $s \in [-\delta, \delta]$ defined by
\begin{equation}
	\tilde{u}_s(a,t) := \hat{u}(a + s \nu(a),t),
	\quad \forall (a,t) \in \mathcal{M} \times [0,T],
	\forall s \in [-\delta, \delta].
	\label{defi_u_s}
\end{equation}
We need this definition, because it is a priori not clear whether $\hat{u}$ is constant
in the normal direction.
Obviously, we have 
\begin{equation}
 \hat{u}_{|\mathcal{M}_s} = \tilde{u}^l_{s|\mathcal{M}_s}.
 \label{a_s_u_s}
\end{equation}
Since the tangential gradient $\tilde{\nabla}_{\mathcal{M}_s}$ only depends on
the values of $\hat{u}$ on $\mathcal{M}_s$, we obtain 
\begin{align*}
	\hat{L} \hat{u}(x_s,t) 
	&= \hat{L}\tilde{u}^l_s(x_s,t) 
	   - \frac{\partial^2 \tilde{u}^l_s}{\partial \nu^2}(x_s,t)
	   + \frac{\partial^2 \hat{u}}{\partial \nu^2}(x_s,t)	
\\	
	&=
	  L\tilde{u}_s(a(x_s),t) 
	   + \frac{\partial^2 \tilde{u}_s}{\partial s^2}(a(x_s),t),		
\quad \forall x_s \in \mathcal{M}_s, \forall s \in (-\delta,\delta),
\end{align*}
where we have used formula (\ref{hat_L_identity}), 
$\frac{\partial \tilde{u}_s^l}{\partial \nu} = 0$
and 
$\frac{\partial^2 \hat{u}}{\partial \nu^2}(x_s,t) = \frac{\partial^2 \tilde{u}_s}{\partial s^2}(a(x_s),t)$
in the last step.
It follows that for all $a \in \mathcal{M}$ and $t \in (0,T)$ we have
\begin{align*}
	f(a,t) &= \frac{1}{2\delta} \int_{- \delta}^{ \delta}{f^l(a + s \nu(a),t) ds} 
= \frac{1}{2 \delta} \int_{- \delta}^{ \delta}{\hat{L} \hat{u}( a + s\nu(a),t) ds}
\\
&= \frac{1}{2 \delta} \int_{- \delta}^{ \delta}{L \tilde{u}_s(a,t) ds}
+ \frac{1}{2 \delta} \int_{- \delta}^{ \delta}{ \frac{\partial^2 \tilde{u}_s}{\partial s^2}(a,t) ds}
\\
&= \frac{1}{2 \delta} L \int_{- \delta}^{ \delta}{\tilde{u}_s(a,t) ds}
\\
&= (L u)(a,t),
\end{align*}
where the term $\int_{-\delta}^\delta{\frac{\partial^2 \tilde{u}_s}{\partial s^2}ds}$ vanishes, because of the Neumann boundary condition in 
$(S^l)$. Hence, $u = \frac{1}{2\delta} \int_{-\delta}^\delta{\tilde{u}_s ds}$ is a solution to $(S)$ if $\hat{u}$ is a solution to the lifted problem $(S^l)$.
From the uniqueness of solutions to $(S^l)$ it follows that $\hat{u}= u^l$,
which shows that $\hat{u}$ has to be constant in the normal direction.

In the following, we prove formula (\ref{hat_L_identity})
and show that $\hat{L}$ is in fact a strongly parabolic second-order operator
on $\mathcal{N}_\delta$. 
First, it is easy to show that the tangential gradient of the projection
$a$ is given by
\begin{align*}
	\nabla_{\mathcal{M}_s} a(x_s) = P(x_s) - d(x_s) \mathcal{H}(x_s),
	\quad \forall x_s \in \mathcal{M}_s.  
\end{align*}
Using the fact that the $\nu(x)= \nu(a(x))$ and $P(x) = P(a(x))$, 
it follows that for $u^l(\cdot,t) := u(a(\cdot),t)$ 
the following identity hold
\begin{align*}
	\nabla_{\mathcal{M}_s} u^l(x_s,t) 
	&= 	
		( \unit - d(x_s) \mathcal{H}(x_s)) \nabla_{\mathcal{M}} u(a(x_s),t).
\end{align*}
Hence, $\forall (x_s,t) \in \mathcal{M}_s \times [0,T]$ we have
\begin{equation}
	\nabla_{\mathcal{M}} u(a(x_s),t) = 
	( \unit - d(x_s) \mathcal{H}(x_s))^{-1} \nabla_{\mathcal{M}_s} u^l(x_s,t)
	= \tilde{\nabla}_{\mathcal{M}_s} u^l(x_s,t),
	\label{identity_gradient}
\end{equation}
or $(\nabla_{\mathcal{M}} u)^l = \tilde{\nabla}_{\mathcal{M}_s} u^l$, respectively.
From this result and the fact that $\frac{\partial u^l}{\partial \nu} = 0$
we directly obtain that
\begin{align*}
	\hat{L} u^l (x,t) =& \d \alpha \left( G^{l\alpha \beta} \d \beta u^l \right)(x,t)
	+ \frac{1}{2} ( P_{\alpha\gamma} G^{l\gamma \eta} G^{l\beta \rho} 
		\d \beta G^l_{\alpha \eta} \d \rho u^l )(x,t)
\\		
	& + w^l(x,t) \cdot \tilde{\nabla}_{\mathcal{M}_s} u^l(x,t)
	- \mathfrak{c}^l(x,t) u^l(x,t) - u^l_t(x,t)
\\
	=& \D \alpha \left( G^{\alpha \beta} \D \beta u \right)(a(x),t)
	+ \frac{1}{2} ( P_{\alpha\gamma} G^{\gamma \eta} G^{\beta \rho} 
		\D \beta G_{\alpha \eta} \D \rho u )(a(x),t)
\\		
	& + w(a(x),t) \cdot \nabla_{\mathcal{M}} u(a(x),t)
	- \mathfrak{c}(a(x),t) u(a(x),t) - u_t(a(x),t)
\\	
	=& Lu (a(x),t).	
\end{align*}
In order to see that the operator $\hat{L}$ is strongly parabolic, we consider the second-order terms in (\ref{Laplacian}). Using the notation 
$A(x) := ( \unit - d \mathcal{H})(x)$ for all $x \in \mathcal{N}_\delta$ as well as 
\begin{align*}
& \left( A_{\alpha \beta} \right)_{\alpha, \beta=1, \ldots, n+1} := A,
\quad & \left( A^{\alpha \beta} \right)_{\alpha, \beta=1, \ldots, n+1} := A^{-1},	  	
\end{align*}
for the components of $A$ and $A^{-1}$, we obtain 
\begin{align*}
	&\d \alpha (G^{l\alpha \beta} \d \beta \hat{u})  + \frac{\partial^2 \hat{u}}{\partial \nu^2}
	= A^{\alpha \rho} P_{\rho\eta} D_\eta (G^{l\alpha \beta} A^{\beta \kappa} P_{\kappa\iota} D_\iota \hat{u})
	+ \frac{\partial^2 \hat{u}}{\partial \nu^2}
	\\
	&= A^{\alpha \rho} D_\rho (G^{l\alpha \beta} A^{\beta \iota} D_\iota \hat{u})
	- A^{\alpha \rho} \nu_\rho \nu_\eta D_\eta (G^{l\alpha \beta} A^{\beta \iota} D_\iota \hat{u})	
	- A^{\alpha \rho} D_\rho ( G^{l\alpha \beta} A^{\beta \kappa} \nu_\kappa \nu_\iota D_\iota \hat{u})
	\\
	&\quad + A^{\alpha \rho} \nu_\rho \nu_\eta D_\eta (G^{l\alpha \beta} A^{\beta \kappa}
	\nu_\kappa \nu_\iota D_\iota \hat{u})
	+ \frac{\partial^2 \hat{u}}{\partial \nu^2}
	\\
	&= A^{\alpha \rho} D_\rho (G^{l\alpha \beta} A^{\beta \iota} D_\iota \hat{u})
	- \nu_\alpha \nu_\eta D_\eta (G^{l\alpha \beta} A^{\beta \iota} D_\iota \hat{u})	
	- A^{\alpha \rho} D_\rho ( \nu_\alpha \nu_\iota D_\iota \hat{u})
	\\
	&\quad + \nu_\alpha \nu_\eta D_\eta ( \nu_\alpha \nu_\iota D_\iota \hat{u})
	+ \frac{\partial^2 \hat{u}}{\partial \nu^2}
	\\
	&= A^{\alpha \rho} D_\rho (G^{l\alpha \beta} A^{\beta \iota} D_\iota \hat{u})
	- \nu_\eta D_\eta ( \nu_\iota D_\iota \hat{u})	
	+ G^{l\alpha \beta} A^{\beta \iota} \nu_\eta D_\eta \nu_\alpha D_\iota \hat{u}	
	- \nu_\rho D_\rho ( \nu_\iota D_\iota \hat{u})
	\\
	&\quad - A^{\alpha \rho} D_\rho \nu_\alpha \nu_\iota D_\iota \hat{u}
	+ \nu_\eta D_\eta ( \nu_\iota D_\iota \hat{u}) + \nu_\alpha \nu_\eta D_\eta \nu_\alpha \nu_\iota D_\iota \hat{u}
	+ \frac{\partial^2 \hat{u}}{\partial \nu^2}
	\\
	&= A^{\alpha \rho} D_\rho (G^{l\alpha \beta} A^{\beta \iota} D_\iota \hat{u})
	 - A^{\alpha \rho} D_\rho \nu_\alpha \nu_\iota D_\iota \hat{u}
	 \\
	&= A^{\alpha \rho} D_\rho (G^{l\alpha \beta} A^{\beta \iota} D_\iota \hat{u})
	 -  ( \tilde{\nabla}_{\mathcal{M}_s} \cdot \nu) \frac{\partial}{\partial \nu} \hat{u}
	\\
	&= A^{\rho \alpha} G^{l\alpha \beta} A^{\beta \iota} D_\rho D_\iota \hat{u}
	 + A^{\alpha \rho} D_\rho (G^{l\alpha \beta} A^{\beta \iota}) D_\iota \hat{u}
	 -  ( \tilde{\nabla}_{\mathcal{M}_s} \cdot \nu) \frac{\partial}{\partial \nu} \hat{u}.
\end{align*}
Here we have used the fact that $(G^l)^{-1} \nu = \nu$ and 
$A^{-1} \nu = \nu$ as well as $\frac{\partial \nu}{\partial \nu} = 0$.
For $G = \unit$, this identity simplifies to
\begin{equation}
	\d \alpha \d \alpha \hat{u}  + \frac{\partial^2 \hat{u}}{\partial \nu^2}
	= A^{\rho \alpha} A^{\alpha \iota} D_\rho D_\iota \hat{u}
	 + A^{\alpha \rho} D_\rho A^{\alpha \iota} D_\iota \hat{u}
	 -  ( \tilde{\nabla}_{\mathcal{M}_s} \cdot \nu) \frac{\partial}{\partial \nu} \hat{u}.
\label{elliptic_part}
\end{equation}
\begin{remark}
A similar extension idea has been used only recently to develop
numerical schemes for the simulation of geometric PDEs on surfaces, see \cite{OS}. 
The model problem considered in \cite{OS} is the following elliptic problem
$$
  	\Delta_\Gamma u - \mathfrak{c} u = f \quad \textnormal{on} \quad \Gamma. 
$$
Using our notation, the authors propose the following non-degenerate extended equations
\[
\begin{aligned}
	& \nabla \cdot (\mu A^{-2} \nabla \hat{u}) - \mu \mathfrak{c}^l \hat{u} = \mu f^l 
		\quad \textnormal{in} \quad \mathcal{N}_\delta,
	\\
	& \frac{\partial \hat{u}}{\partial \nu} = 0
		\quad \textnormal{on} \quad \partial\mathcal{N}_\delta,
\end{aligned}
\]
where $\mu = \det A$. 
This problem is of course equivalent to the problem
$$
	\frac{1}{\mu} \nabla \cdot (\mu A^{-2} \nabla \hat{u}) - \mathfrak{c}^l \hat{u} = f^l 
		\quad \textnormal{in} \quad \mathcal{N}_\delta
$$
with zero Neumann boundary conditions.
A long calculation now reveals that in the case of $G=\unit$ 
and $w=0$ the resulting elliptic part of our 
parabolic operator $\hat{L}$ in (\ref{Laplacian}) indeed 
reduces to the operator
$\frac{1}{\mu} \nabla \cdot (\mu A^{-2} \nabla \hat{u}) - \mathfrak{c}^l \hat{u}$,
\begin{align*}
	&\frac{1}{\mu} \nabla \cdot (\mu A^{-2} \nabla \hat{u})
	= \frac{1}{\mu} D_\alpha (\mu A^{\alpha \beta} A^{\beta \gamma} D_\gamma \hat{u})
	\\
	&\quad = A^{\alpha \beta} A^{\beta \gamma} D_\alpha D_\gamma \hat{u}
	   + A^{\alpha \beta} D_\alpha A^{\beta \gamma} D_\gamma \hat{u}
	   + D_\alpha A^{\alpha \beta} A^{\beta \gamma} D_\gamma \hat{u}
	   + \frac{1}{\mu} D_\alpha \mu A^{\alpha \beta} A^{\beta \gamma} D_\gamma \hat{u} 
	\\   
	&\quad = A^{\alpha \beta} A^{\beta \gamma} D_\alpha D_\gamma \hat{u}
	   + A^{\alpha \beta} D_\alpha A^{\beta \gamma} D_\gamma \hat{u}
	   - A^{\kappa \iota} D_\kappa \nu_\iota \frac{\partial}{\partial \nu} \hat{u},
\end{align*}
since
\begin{align*}
	& D_\alpha A^{\alpha \beta} A^{\beta \gamma} D_\gamma \hat{u}
	  + \frac{1}{\mu} D_\alpha \mu A^{\alpha \beta} A^{\beta \gamma} D_\gamma \hat{u}
	\\  
	&\quad =  - A^{\alpha \kappa} A^{\beta \iota} D_\alpha A_{\kappa \iota} A^{\beta \gamma} 
		D_\gamma \hat{u}
+ A^{\kappa \iota} D_\alpha A_{\kappa \iota} A^{\alpha \beta} A^{\beta \gamma} D_\gamma \hat{u}    
	\\
	&\quad =  A^{\alpha \kappa} D_\alpha (d \mathcal{H}_{\kappa \iota}) 
				A^{\iota \beta} A^{\beta \gamma} D_\gamma \hat{u}
			- A^{\kappa \iota} D_\alpha (d \mathcal{H}_{\kappa \iota})
					 A^{\alpha \beta} A^{\beta \gamma} D_\gamma \hat{u}
	\\
	&\quad =  A^{\alpha \kappa} \nu_\alpha \mathcal{H}_{\kappa \iota} 
				A^{\iota \beta} A^{\beta \gamma} D_\gamma \hat{u}
			 + d A^{\alpha \kappa} D_\alpha D_\kappa D_\iota d
				A^{\iota \beta} A^{\beta \gamma} D_\gamma \hat{u}	
	\\
	& \quad \qquad			
			 - A^{\kappa \iota} \nu_\alpha \mathcal{H}_{\kappa \iota}
					 A^{\alpha \beta} A^{\beta \gamma} D_\gamma \hat{u}
			 - d A^{\kappa \iota} D_\alpha D_\kappa D_\iota d
					 A^{\alpha \beta} A^{\beta \gamma} D_\gamma \hat{u}	
	\\
	&\quad = d A^{\alpha \kappa} D_\iota D_\alpha D_\kappa d
				A^{\iota \beta} A^{\beta \gamma} D_\gamma \hat{u}				
			 - A^{\kappa \iota} \mathcal{H}_{\kappa \iota}
					 \nu_\gamma D_\gamma \hat{u}
			 - d A^{\kappa \iota} D_\alpha D_\kappa D_\iota d
					 A^{\alpha \beta} A^{\beta \gamma} D_\gamma \hat{u}	
	\\
	&\quad =  - A^{\kappa \iota} D_\kappa \nu_\iota
					 \frac{\partial}{\partial \nu} \hat{u}.
\end{align*}
From (\ref{elliptic_part}) it hence follows that
\begin{align*}
	\frac{1}{\mu} \nabla \cdot (\mu A^{-2} \nabla \hat{u}) =
	\d \alpha \d \alpha \hat{u}  + \frac{\partial^2 \hat{u}}{\partial \nu^2}.
\end{align*}
\end{remark}

\subsection*{Acknowledgements}
This paper resulted from a stay of both authors 
at the Isaac Newton Institute of Mathematical Sciences
in Cambridge, UK, as participants of the research programme
\textit{Free Boundary Problems and Related Topics}.
The authors would like to thank the institute, and the organizers of the programme 
for their invitation and their kind hospitality.
The second author was also supported by the Alexander von Humboldt Foundation, Germany,
by a Feodor Lynen Research Fellowship in collaboration with the University of Warwick, UK.

\end{document}